\documentclass[12pt,a4paper]{amsart}
\usepackage[T1]{fontenc}
\usepackage{amsmath,amsthm,amsfonts,amssymb,amscd}
\usepackage[colorlinks,
            linkcolor=blue,       
            anchorcolor=blue,  
            citecolor=blue,        
            ]{hyperref}
\usepackage{graphicx}
\usepackage{quiver}                     
\usepackage{subcaption}                 
\newtheorem{theorem}{Theorem}[section]  
\newtheorem{pro}[theorem]{Proposition}  
\newtheorem{cor}[theorem]{Corollary}    
\newtheorem{lem}[theorem]{Lemma}        
\newtheorem{remark}[theorem]{Remark} 

\title{Infinite class field tower with small root discriminant}
\author{Qi Liu}
\author{Zugan Xing*}
\date{2024.May}
\keywords{Class field tower, ramification, root discriminant}
\subjclass{11R29, 11R37}

\begin{document}


\address{(Qi Liu) Department of Mathematics, Capital Normal University, Beijing,
China}
\email{QiLiu67@aliyun.com}
\address{(Zugan Xing*) Department of Mathematics, Capital Normal University, Beijing,
China}
\email{xingzugan@aliyun.com}
\setlength{\belowcaptionskip}{-5pt}
\setlength{\abovecaptionskip}{-5pt}

\begin{abstract}
We generalize Schoof's theorem in 1986 and apply this to construct a class of Kummer extensions of the cyclotomic fields with infinite class tower. As an application, we give some number fields with a small root discriminant, which has an infinite $p$-class field tower when $p=3, 5, 7$.
\end{abstract}

\maketitle


\section{Introduction}
Let $K$ be a number field. We define a sequence of extensions of $K$ as follows: Let $K_{i+1}$ be the maximal abelian unramified extension of $K_i$, for $i\geq 0$, and $K_0=K$. Then, we call this sequence of fields the Hilbert class field tower of $K$. The Hilbert class field tower of $K$ is called finite if there exists $i\geq 0$ such that $K_{i+1}=K_i$. Otherwise, it is named infinite. For a rational prime $p$, the $p$-Hilbert class field of $K$ is defined as the maximal abelian unramified $p$-extension of $K$. Likewise, the $p$-class field tower of $K$ can be defined accordingly.

Historically, Golod and Shafarevich \cite{golod1964class} demonstrated in 1964 that certain quadratic number fields,  $\mathbb{Q}(\sqrt{-3\cdot5\cdot7\cdot11\cdot13\cdot17\cdot19})$, possess an infinite 2-class field tower. Schoof later complemented this seminal finding in 1986, provided a robust method for constructing number fields with infinite class field towers, and produced infinitely many quadratic number fields admitting an infinite 2-class field tower.

The existence of an infinite class field tower is closely tied to the size of the root discriminant. Odlyzko's bounds \cite{odlyzko1990bounds} imply that any number field with infinite class field tower must have root discriminant at least 22.3. Furthermore, assuming the GRH, the bound becomes 44.3. It is an interesting task to find a number field with a small root discriminant having an infinite $p$-class field tower.

Martinet \cite{martinet1978tours} found a number field $K=\mathbb Q(\zeta_{11}+\zeta_{11}^{-1},\sqrt{-46})$ with root discriminant $\approx 92.4$, having infinite class field tower. After that, If we fix a finite set $S$ of finite places of K, Hajir, and Maire \cite{hajir2002tamely} gave an example with root discriminant $\approx82.1$, having infinite S-tamely ramified 2-tower. Recently, Hajir, Maire, and Ramakrishna \cite{hajir2021cutting} used the "cutting" method to improve the root discriminant record of the former to 78.427. 

For $p\geq 3$, Leshin \cite{leshin2014infinite} applies School's method to prove that a class of $\mathbb Z/p\mathbb Z \rtimes \mathbb Z/(p-1)\mathbb Z$  extensions of $\mathbb Q$ with infinite class field tower, and then finds that $\mathbb Q(\zeta_3,\sqrt[3]{79\cdot 97})$ has infinite 3-class field tower, with root discriminant $\approx 1400.4$. We continue the work of Leshin, our main result is the following.

\begin{theorem}\label{thm 1.1}
Suppose $m, n$ are positive integers such that $m$ divides $n$. Let $h$ denote the class number of $\mathbb{Q}(\zeta_n, \sqrt[m]{p})$. If $h$ is greater than or equal to $\frac{12}{\varphi(n)m} + \frac{8}{\varphi(n)}$, where \(\varphi(n)\) is the Euler's totient function of $n$ and $p$ is a prime, then there exist infinitely many prime numbers $q$ such that $\mathbb{Q}(\zeta_n, \sqrt[m]{pq})$ has an infinite class field tower.
\end{theorem}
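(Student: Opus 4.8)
The plan is to apply the Golod--Shafarevich criterion not to $L'=\mathbb Q(\zeta_n,\sqrt[m]{pq})$ directly, but to a larger field that is unramified over $L'$, obtained by adjoining ramification at $q$ far up the class field tower of $L=\mathbb Q(\zeta_n,\sqrt[m]{p})$. Write $F=\mathbb Q(\zeta_n)$; since $m\mid n$ we have $\zeta_m\in F$, so $L/F$ and $L'/F$ are cyclic Kummer extensions of degree $m$ and both are Galois over $\mathbb Q$. I shall work in the non-degenerate case $[L:\mathbb Q]=[L':\mathbb Q]=\varphi(n)m=:N$ with $n\ge 3$, so that every field appearing below is totally complex (for $n\le 2$, which forces $m\le 2$, one only adjusts the Golod--Shafarevich input in the obvious way). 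Let $H$ be the Hilbert class field of $L$, so $H/L$ is unramified and $[H:\mathbb Q]=hN$, and put
\[
L^{+}=F(\sqrt[m]{p},\sqrt[m]{q})=L(\sqrt[m]{q})=L'(\sqrt[m]{q}),\qquad M=H(\sqrt[m]{q})=H\cdot L^{+}.
\]
First I would check that $M/L'$ is everywhere unramified: $M/L^{+}$ is the base change of the unramified extension $H/L$; and $L^{+}/L'=L'(\sqrt[m]{q})$ is unramified above $q$ (where $q$ is already totally ramified in $L'/F$, hence has valuation $\equiv 0\pmod m$ in $L'$), above the primes dividing $mp$ (provided $q$ is an $m$-th power in the finitely many relevant local fields, which we arrange by a congruence condition), and at the archimedean places. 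Since a finite unramified extension has an infinite class field tower exactly when its base does, it then suffices to exhibit, for one fixed prime $\ell\mid m$, infinitely many primes $q$ for which $M$ has an infinite $\ell$-class field tower; note $\mu_\ell\subseteq F\subseteq M$ throughout.

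To choose $q$, fix a prime $\ell\mid m$, let $\widetilde H$ be the Hilbert class field of $H$ (the second layer of the class field tower of $L$), a number field Galois over $\mathbb Q$, and let $c$ be a positive integer --- depending only on $L$ and $m$ --- large enough that $q\equiv 1\pmod c$ forces $q$ to be an $m$-th power in each of the finitely many local fields mentioned above. Applying the Chebotarev density theorem to $\widetilde H\cdot\mathbb Q(\zeta_c)$ yields infinitely many primes $q$ that split completely in $\widetilde H$ and satisfy $q\equiv 1\pmod c$; fix one. Splitting completely in $\widetilde H$ implies that $q$ splits completely in $H$ --- so that $H$ has exactly $hN$ primes $\mathfrak Q_1,\dots,\mathfrak Q_{hN}$ above $q$ --- and that each $\mathfrak Q_i$ is principal in $H$. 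Moreover $M=H(\sqrt[m]{q})$ is cyclic of degree $m$ over $H$ (since $q$, being unramified in $H$, is not a proper power in $H^{*}$), and it is ramified --- tamely and totally --- at exactly the primes $\mathfrak Q_1,\dots,\mathfrak Q_{hN}$.

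The hard part, and the generalization of Schoof's argument that the proof rests on, is the genus-theoretic lower bound
\[
d_\ell(\mathrm{Cl}\,M)\ \ge\ hN-r_2(H)-1\ =\ \tfrac{hN}{2}-1 .
\]
Here is the mechanism. Because $\mathfrak Q_i=(\beta_i)$ is principal in $H$ and $\mathfrak Q_i\mathcal O_M=\tilde{\mathfrak Q}_i^{\,m}$ with $\tilde{\mathfrak Q}_i$ the prime of $M$ above it, each class $[\tilde{\mathfrak Q}_i]\in\mathrm{Cl}(M)$ is $m$-torsion; the same is true, modulo the image of $\mathrm{Cl}(H)$, of the class of every prime of $M$ that splits in $M/H$, so the coinvariants $\mathrm{Cl}(M)_{G}$, where $G=\mathrm{Gal}(M/H)$, are generated by $m$-torsion elements together with the image of $\mathrm{Cl}(H)$. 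On the other hand Chevalley's ambiguous class number formula gives
\[
\bigl|\mathrm{Cl}(M)^{G}\bigr|=\frac{h_H\,m^{hN}}{m\,\bigl[E_H:E_H\cap N_{M/H}M^{*}\bigr]},
\]
in which $\bigl[E_H:E_H\cap N_{M/H}M^{*}\bigr]$ divides $\bigl[E_H:E_H^{\,m}\bigr]=m^{\,r_1(H)+r_2(H)}=m^{\,r_2(H)}$ (using $\mu_m\subseteq E_H$ and $H$ totally complex); since $|\mathrm{Cl}(M)^{G}|=|\mathrm{Cl}(M)_{G}|$ for the cyclic group $G$, comparing the $\ell$-parts of these orders --- and using that $\mathrm{Cl}(M)_{G}$ modulo the image of $\mathrm{Cl}(H)$ is $m$-torsion --- forces $d_\ell(\mathrm{Cl}(M)_{G})\ge hN-r_2(H)-1$, whence the displayed bound since $\mathrm{Cl}(M)_{G}$ is a quotient of $\mathrm{Cl}(M)$. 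Carrying this out carefully (isolating the unit contribution and exploiting the principality of the $\mathfrak Q_i$ so that the $\mathrm{Cl}(H)$-term washes out) is the technical core of the theorem.

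Finally I would invoke Golod--Shafarevich. The field $M$ is totally complex with $[M:\mathbb Q]=hNm$, so $r_2(M)=\tfrac{hNm}{2}$; as $\mu_\ell\subseteq M$, the criterion asserts that $M$ has an infinite $\ell$-class field tower provided $d_\ell(\mathrm{Cl}\,M)\ge 2+2\sqrt{r_2(M)+2}$. By the bound above this is implied by
\[
\frac{hN}{2}-1\ \ge\ 2+2\sqrt{\frac{hNm}{2}+2},
\]
and squaring shows this holds whenever $hN\ge 12+8m$, i.e.\ whenever $h\ge\dfrac{12}{\varphi(n)m}+\dfrac{8}{\varphi(n)}$ --- exactly the hypothesis. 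Therefore $M$, and hence $L'=\mathbb Q(\zeta_n,\sqrt[m]{pq})$, has an infinite class field tower for each of the infinitely many primes $q$ furnished in the second step, which proves the theorem.
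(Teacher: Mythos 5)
Your proposal is correct in outline and follows the same global architecture as the paper --- the auxiliary field $M=H(\sqrt[m]{q})$ unramified over $\mathbb{Q}(\zeta_n,\sqrt[m]{pq})$, a Chebotarev choice of $q$ combining a splitting condition with a congruence, the lower bound $d_\ell\,\mathrm{Cl}(M)\ge \tfrac{h\varphi(n)m}{2}-1$, and the Golod--Shafarevich criterion with the same numerics --- but the engine behind the rank bound is genuinely different. The paper's technical core is Proposition \ref{prop 2.4}, a generalization of Schoof's criterion built on Schoof's Proposition 3.1: $d_\ell\hat H^0(\Delta,U)$ is bounded below by the local inertia contributions of the $h\varphi(n)m$ totally ramified primes over $q$, which directly yields $d_\ell\,\mathrm{Cl}(M)\ge h\varphi(n)m-1-d_\ell\bigl(E_H/(E_H\cap N_{M/H}U_M)\bigr)$ with no hypothesis on the ideal classes of those primes. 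You instead reprove the same bound via Chevalley's ambiguous class number formula for the cyclic degree-$m$ extension $M/H$, converting the order of $\mathrm{Cl}(M)^{G}=\mathrm{Cl}(M)_{G}$ into a rank statement through the observation that $\mathrm{Cl}(M)_G$ modulo the image of $\mathrm{Cl}(H)$ is killed by $m$; this is sound (the formula does hold for cyclic extensions of composite degree, and $[E_H:E_H\cap N_{M/H}M^{*}]$ divides $m^{r_2(H)}$), and it gives a self-contained argument, at the price of a stronger Chebotarev condition: you require $q$ to split completely in the Hilbert class field $\widetilde H$ of $H$ so that the primes over $q$ are principal. That extra layer is in fact unnecessary even within your argument --- once you quotient by the full image of $\mathrm{Cl}(H)$, the $h_H$-contribution cancels whether or not the $\mathfrak Q_i$ are principal --- and the paper only needs $q$ split in $H(\zeta_{n\cdot P(n)\cdot 2^{2n_0}})$, whose cyclotomic part plays exactly the role of your congruence $q\equiv 1\pmod c$ (the paper's Lemma \ref{lem 3.2} and Proposition \ref{prop 3.4} carry out the local $m$-th power verification explicitly, including the $2$-adic case, which you only assert). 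Two harmless imprecisions: your Golod--Shafarevich threshold $2+2\sqrt{r_2(M)+2}$ is slightly stronger than the needed $2+2\sqrt{d_\ell E_M+1}$ with $d_\ell E_M=r_2(M)$, and the arithmetic still closes exactly at $h\ge\frac{12}{\varphi(n)m}+\frac{8}{\varphi(n)}$; and your final descent correctly concludes only that the full class field tower of $\mathbb{Q}(\zeta_n,\sqrt[m]{pq})$ is infinite, which is what the theorem asserts and is consistent with the paper's Remark that the $p$-tower need not descend along unramified extensions.
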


As an application, we find a new Kummer extension of a cyclotomic number field with an infinite 3-class field tower and the small root discriminant.
\begin{cor}
    The number field $\mathbb Q(\zeta_{9},\sqrt[3]{7\cdot 181})$ has infinite 3-class field tower with root discriminant $\approx 776.7$.
\end{cor}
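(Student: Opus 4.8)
The statement is an application of Theorem~\ref{thm 1.1}, and the plan has three parts. \emph{Step 1: checking the hypothesis.} Take $n=9$, $m=3$, $p=7$. Since $\varphi(9)=6$, the numerical threshold in Theorem~\ref{thm 1.1} is
\[
\frac{12}{\varphi(9)\cdot 3}+\frac{8}{\varphi(9)}=\frac{12}{18}+\frac{8}{6}=2,
\]
so it suffices to show that the class number of the degree-$18$ field $\mathbb{Q}(\zeta_9,\sqrt[3]{7})$ is at least $2$; in fact one expects $3$ to divide it, which is precisely what singles out the prime $7$. I would verify this with a computer algebra system such as PARI/GP or Magma. If one wishes to avoid the GRH hypothesis behind class-number algorithms, the weak bound $h\ge 2$ needed here can instead be obtained by exhibiting an explicit non-principal ideal, or from the ambiguous-class-number (genus) formula for the cyclic Kummer step $\mathbb{Q}(\zeta_9,\sqrt[3]{7})/\mathbb{Q}(\zeta_9)$.

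\emph{Step 2: locating the prime $q$.} Theorem~\ref{thm 1.1} only asserts that infinitely many admissible $q$ exist, so one must revisit its proof to extract the explicit conditions on $q$: at the least $q\equiv 1\pmod 9$, so that $q$ splits completely in $\mathbb{Q}(\zeta_9)$ and contributes $\varphi(9)=6$ new ramified primes to $\mathbb{Q}(\zeta_9,\sqrt[3]{7q})/\mathbb{Q}(\zeta_9)$, together with a Chebotarev-type splitting condition in an auxiliary extension (such as the Hilbert class field of $\mathbb{Q}(\zeta_9,\sqrt[3]{7})$) that forces the $3$-rank of the class group of $\mathbb{Q}(\zeta_9,\sqrt[3]{7q})$ to be large enough for the Golod--Shafarevich inequality. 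One then checks that $q=181$ satisfies all of these (note $181\equiv 1\pmod 9$) while the smaller candidates $q=19,37,73,109,127,163$ do not, so that $\mathbb{Q}(\zeta_9,\sqrt[3]{7\cdot 181})$ indeed has an infinite class field tower and $181$ yields the smallest root discriminant available from this construction. Since the Golod--Shafarevich argument in the proof of Theorem~\ref{thm 1.1} produces an infinite pro-$3$ quotient, what one obtains is that the $3$-class field tower is infinite.

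\emph{Step 3: the root discriminant.} Let $K=\mathbb{Q}(\zeta_9,\sqrt[3]{7\cdot 181})$, a totally complex field of degree $18$ over $\mathbb{Q}$. By the tower formula for discriminants,
\[
|d_K|=|d_{\mathbb{Q}(\zeta_9)}|^{3}\cdot N_{\mathbb{Q}(\zeta_9)/\mathbb{Q}}\bigl(\mathfrak{d}_{K/\mathbb{Q}(\zeta_9)}\bigr),\qquad |d_{\mathbb{Q}(\zeta_9)}|=3^{9},
\]
and $\mathfrak{d}_{K/\mathbb{Q}(\zeta_9)}$ is supported at the primes above $3$, $7$ and $181$. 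At $7$ and $181$ the ramification is tame of degree $3$: since $7$ splits into two primes of residue degree $3$ and $181$ splits completely in $\mathbb{Q}(\zeta_9)$, and every one of these primes is totally ramified in $K/\mathbb{Q}(\zeta_9)$, this part of $N(\mathfrak{d})$ equals $(7\cdot 181)^{12}$. At the prime above $3$ the ramification is wild; one computes the local different exponent of $\mathbb{Q}_3(\zeta_9,\sqrt[3]{7\cdot 181})/\mathbb{Q}_3(\zeta_9)$ by a short calculation (determining the level of the unit $7\cdot 181\equiv 7\pmod 9$ in $\mathbb{Q}_3(\zeta_9)^{\times}$ modulo cubes), which works out to $4$, so the full power of $3$ in $|d_K|$ is $3^{27+4}=3^{31}$. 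Assembling the pieces gives $\operatorname{rd}(K)=|d_K|^{1/18}=3^{31/18}(7\cdot 181)^{2/3}\approx 776.7$; alternatively one evaluates $d_K$ directly in PARI/GP.

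The delicate point is Step~1: an unconditional proof that the degree-$18$ field $\mathbb{Q}(\zeta_9,\sqrt[3]{7})$ has class number at least $2$, since the standard algorithms are usually only GRH-conditional for fields of this size (the weak bound needed should nonetheless be within reach of an explicit argument). Step~2 is a matter of careful bookkeeping, where one must be sure that $181$ really is the first admissible prime, and the wild local computation at $3$ in Step~3 is routine once correctly set up.
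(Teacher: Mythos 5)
Your overall route matches the paper's: verify that $h(\mathbb{Q}(\zeta_9,\sqrt[3]{7}))=3$ exceeds the threshold $\frac{12}{\varphi(9)\cdot 3}+\frac{8}{\varphi(9)}=2$, verify that $181$ splits completely into principal ideals in $\mathbb{Q}(\zeta_9,\sqrt[3]{7})$ (equivalently, splits completely in its Hilbert class field), and then run the machinery of Theorem \ref{thm 1.1}; your Step 3 discriminant bookkeeping also reproduces the stated value $\approx 776.7$. The genuine gap is in Step 2, and it sits exactly where the paper's proof does its real work. The splitting hypothesis of Theorem \ref{thm 1.1}, coming from Proposition \ref{prop 3.4} with $n=9$, $m=3$, is complete splitting of $q$ in $H(\zeta_{n\cdot P(n)})=H(\zeta_{27})$, i.e.\ $q\equiv 1\pmod{27}$; but $181\equiv 19\pmod{27}$, so the corollary is \emph{not} a literal application of Theorem \ref{thm 1.1}, and your plan of ``extracting the conditions from the proof and checking that $q=181$ satisfies them'' would fail at that check. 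What rescues the construction --- and what the paper actually proves --- is that since $m=3$ (not $9$) one only needs $181$ to be a cube in $\mathbb{Q}_3$, so that adjoining $\sqrt[3]{181}$ is locally trivial above $3$ and $\mathbb{Q}(\zeta_9,\sqrt[3]{7},\sqrt[3]{181})/\mathbb{Q}(\zeta_9,\sqrt[3]{7\cdot 181})$ is unramified at the primes above $3$; by Lemma \ref{lem 3.2}(I) this follows from $3^{2}\mid 181^{2}-1$, i.e.\ from $181\equiv 1\pmod 9$. In your write-up the congruence $q\equiv 1\pmod 9$ appears only as a splitting condition in $\mathbb{Q}(\zeta_9)$ used to count ramified primes; the unramifiedness of the Kummer step above $3$ --- the one nontrivial local verification, and the reason the mod-$9$ condition suffices where the theorem asks for mod-$27$ --- is never argued, so the unramified extension needed for the descent is not established.

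Two smaller points: the mechanism is not that the $3$-rank of the class group of $\mathbb{Q}(\zeta_9,\sqrt[3]{7\cdot 181})$ is forced to be large; rather, Proposition \ref{prop 2.4} is applied to $L=H(\sqrt[3]{181})$ over $H$ (where $H$ is the Hilbert class field of $\mathbb{Q}(\zeta_9,\sqrt[3]{7})$, in which the $54$ primes above $181$ ramify), and the infinitude then descends to $\mathbb{Q}(\zeta_9,\sqrt[3]{7\cdot 181})$ through the everywhere-unramified extension. Also, your claim that $181$ is the smallest admissible $q$ is neither needed for, nor asserted by, the corollary, so you can drop that bookkeeping.
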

Moreover, we also obtain some new number fields, with infinite p-class field and small root discriminant, where $p=5, 7$.
\begin{cor}
The number field $\mathbb Q(\zeta_{40},\sqrt[5]{3\cdot 41})$ has infinite 5-class field tower with root discriminant $\approx 1196.2$.
\end{cor}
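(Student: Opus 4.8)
The plan is to realize $\mathbb Q(\zeta_{40},\sqrt[5]{3\cdot 41})$ as an instance of Theorem~\ref{thm 1.1} and then to compute its root discriminant by hand. First I would apply Theorem~\ref{thm 1.1} with $n=40$, $m=5$, and $p=3$. Here $m\mid n$, $\varphi(40)=16$, and the threshold appearing in the hypothesis is
\[
\frac{12}{\varphi(n)\,m}+\frac{8}{\varphi(n)}=\frac{12}{80}+\frac{8}{16}=\frac{13}{20}<1 .
\]
Since the class number $h$ of $\mathbb Q(\zeta_{40},\sqrt[5]{3})$ is a positive integer, the required inequality $h\ge\frac{13}{20}$ holds for free, so Theorem~\ref{thm 1.1} already yields infinitely many primes $q$ such that $\mathbb Q(\zeta_{40},\sqrt[5]{3q})$ has an infinite class field tower. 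Because the infinite tower produced in the proof of Theorem~\ref{thm 1.1} comes from a Golod--Shafarevich inequality at the prime $m$ (here $m=5$ is prime), it is in fact an infinite $5$-class field tower.

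Next I would check that $q=41$ is admissible for this construction. This means verifying the splitting/congruence conditions imposed on $q$ in the proof of Theorem~\ref{thm 1.1}; I expect these reduce to $q$ splitting completely in $\mathbb Q(\zeta_{40})$, i.e.\ $q\equiv 1\pmod{40}$, and $41\equiv 1\pmod{40}$ is in fact the smallest such prime, which is why it gives the smallest root discriminant among fields of this form. One also notes that $3\cdot 41=123$ is not a fifth power in $\mathbb Q(\zeta_{40})^{\times}$ (already from $v_{\mathfrak p}(123)=1$ at a prime $\mathfrak p\mid 3$), so $[\mathbb Q(\zeta_{40},\sqrt[5]{123}):\mathbb Q]=80$. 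With these in hand, $L:=\mathbb Q(\zeta_{40},\sqrt[5]{3\cdot 41})$ has an infinite $5$-class field tower.

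Finally I would compute the root discriminant. Writing $F=\mathbb Q(\zeta_{40})$, the conductor--discriminant/tower formula gives $|d_L|=|d_F|^{5}\cdot N_{F/\mathbb Q}(\mathfrak d_{L/F})$, with $|d_F|=2^{32}5^{12}$. In the Kummer step $L=F(\sqrt[5]{123})$ the primes of $F$ above $2$ stay unramified (since $123$ is a unit there and $2\nmid 5$); the four primes above $3$ and the sixteen primes above $41$ are tamely ramified with $e=5$, hence have different exponent $4$; and the two primes above $5$ (each with $e(\mathfrak p/5)=4$, $f=2$) are wildly ramified. Carrying out the local analysis of $F_{\mathfrak p}(\sqrt[5]{123})/F_{\mathfrak p}$ for $\mathfrak p\mid 5$ — equivalently, computing the conductor of the associated degree-$5$ Kummer character of $F_{\mathfrak p}$, which contains $\mu_5$ — one finds different exponent $8$ there. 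Assembling all contributions gives $|d_L|=2^{160}\,3^{64}\,5^{92}\,41^{64}$, so the root discriminant is $|d_L|^{1/80}=4\cdot 3^{4/5}\cdot 5^{23/20}\cdot 41^{4/5}\approx 1196.2$. The main obstacle is precisely this wild computation at $5$; a lesser point is confirming that $q=41$ satisfies exactly the hypotheses used inside the proof of Theorem~\ref{thm 1.1}, as opposed to merely a sufficient splitting condition.
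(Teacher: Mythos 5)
Your proposal has a genuine gap at the step you dismiss as a ``lesser point'': the verification that $q=41$ is actually admissible for the construction behind Theorem~\ref{thm 1.1}. You guess that the splitting condition from the proof ``reduce[s] to $q$ splitting completely in $\mathbb Q(\zeta_{40})$,'' i.e.\ $q\equiv 1\pmod{40}$. That is not what Proposition~\ref{prop 3.4} asks for: it requires $q$ to split completely in $H(\zeta_{n\cdot P(n)\cdot 2^{2n_0}})$, which for $n=40=2^3\cdot 5$ already demands $q\equiv 1\pmod{25600}$ at the cyclotomic level, and $41$ certainly fails that. More concretely, what Proposition~\ref{prop 3.4} needs at the ``bad'' prime $5$ is that $41\in\mathbb Q_5^{\,5}$, i.e.\ (by Lemma~\ref{lem 3.2}) $5^2\mid 41^4-1$; but $41^4\equiv 11\pmod{25}$, so $41\notin\mathbb Q_5^{\,5}$ and $(1-\zeta_5)$ \emph{is} ramified in $\mathbb Q(\zeta_5,\sqrt[5]{41})$. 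So you cannot invoke the construction of Theorem~\ref{thm 1.1} as a black box here; the unramifiedness of $\mathbb Q(\zeta_{40},\sqrt[5]{3},\sqrt[5]{41})/\mathbb Q(\zeta_{40},\sqrt[5]{3\cdot 41})$ above $5$ must be argued separately, and your proposal supplies no such argument.

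The paper's proof fills exactly this gap with a different, hands-on device: it writes $\mathbb Q(\zeta_{5},\sqrt[5]{3},\sqrt[5]{41})$ as the compositum of $\mathbb Q(\zeta_{5},\sqrt[5]{3\cdot 41})$ and $\mathbb Q(\zeta_{5},\sqrt[5]{3\cdot 41^2})$, and then observes $3\cdot 41^2\equiv 13^5\pmod{5^3}$, so by the criterion of Lemma~\ref{lem 3.2} the prime $(1-\zeta_5)$ is unramified in the second factor. Combined with the usual tame argument at $3$ and $41$, this gives the everywhere-unramified extension over $\mathbb Q(\zeta_{40},\sqrt[5]{3\cdot 41})$, after which the Golod--Shafarevich count (with $\frac{12}{\varphi(40)\cdot 5}+\frac{8}{\varphi(40)}\le 1$ making the class-number hypothesis vacuous) delivers the infinite $5$-tower. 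Your root-discriminant tally (tame at $3$ and $41$, wild at $5$, unramified at $2$, leading to $\approx 1196.2$) is the right kind of computation and appears numerically consistent, but the structural part of the proof --- why the relevant extension is unramified --- is missing from your write-up, and the shortcut you propose in its place does not work.
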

\begin{cor}
The number field $\mathbb Q(\zeta_{35},\sqrt[7]{5\cdot 71})$ has infinite 7-class field tower with root discriminant $\approx 1608.8$.
\end{cor}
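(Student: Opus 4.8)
The plan is to realize $F:=\mathbb{Q}(\zeta_{35},\sqrt[7]{5\cdot 71})$ as an instance of Theorem~\ref{thm 1.1} with $n=35$, $m=7$, $p=5$, $q=71$, and then to make the proof of that theorem explicit enough to pin down $q=71$ and the root discriminant. The hypotheses of Theorem~\ref{thm 1.1} hold for free here: $7\mid 35$, $\varphi(35)=\varphi(5)\varphi(7)=24$, and
\[
\frac{12}{\varphi(35)\cdot 7}+\frac{8}{\varphi(35)}=\frac{12}{168}+\frac{8}{24}=\frac{17}{42}<1,
\]
so the class number $h$ of $\mathbb{Q}(\zeta_{35},\sqrt[7]{5})$ automatically exceeds the required bound.

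The next step is to extract from the proof of Theorem~\ref{thm 1.1} exactly which $q$ are admissible and why the tower is a $7$-tower. Following Schoof and Leshin, that proof should proceed via the auxiliary field $E:=\mathbb{Q}(\zeta_{35},\sqrt[7]{5},\sqrt[7]{q})=F(\sqrt[7]{q})$: one checks that $E/F$ is an unramified cyclic extension of degree $7$, and that $E$ itself has an infinite $7$-class field tower, from which $F$ inherits one (the $7$-class field tower of $E$ being then an infinite unramified $7$-extension of $F$). The infiniteness for $E$ is a Golod--Shafarevich estimate: if $q$ splits completely in $\mathbb{Q}(\zeta_{35},\sqrt[7]{5})$ then it splits there into $[\mathbb{Q}(\zeta_{35},\sqrt[7]{5}):\mathbb{Q}]=168$ primes, all totally and tamely ramified in $E$, so Chevalley's ambiguous class number formula forces the $7$-rank of $\mathrm{Cl}(E)$ to be at least $168-1-(r_1+r_2)=83$, where $r_1+r_2=84$ for the totally complex field $\mathbb{Q}(\zeta_{35},\sqrt[7]{5})$; this sits comfortably above the Golod--Shafarevich threshold for the totally complex field $E$ of degree $1176$. (When $7\mid h$ one runs the same argument over the $7$-Hilbert class field of $\mathbb{Q}(\zeta_{35},\sqrt[7]{5})$, which only improves the rank; this is where the hypothesis on $h$ enters in general.) It then remains to verify admissibility for $q=71$: $71=2\cdot 35+1\equiv 1\pmod{35}$, so $71$ splits completely in $\mathbb{Q}(\zeta_{35})$; $5^{(71-1)/7}=5^{10}=(5^{5})^{2}\equiv 1\pmod{71}$ since $5^{5}=3125\equiv 1\pmod{71}$, so $5$ is a $7$-th power modulo $71$ and hence $71$ splits completely in $\mathbb{Q}(\zeta_{35},\sqrt[7]{5})$; and the $7$-adic condition ensuring that $F(\sqrt[7]{71})/F$ is unramified above $7$ must be checked by an explicit local computation in the completion of $F$ at its prime over $7$.

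For the root discriminant I would compute $|d_F|$ and extract $|d_F|^{1/168}$, since $[F:\mathbb{Q}]=\varphi(35)\cdot 7=168$ and the only ramified rational primes are $5,7,71$. From $|d_{\mathbb{Q}(\zeta_{35})}|=35^{24}/(5^{6}7^{4})=5^{18}7^{20}$ and the tower formula $|d_F|=|d_{\mathbb{Q}(\zeta_{35})}|^{7}\cdot N_{F/\mathbb{Q}}(\mathfrak{D}_{F/\mathbb{Q}(\zeta_{35})})$, the relative different is computed prime by prime: the unique prime over $5$ (residue degree $6$, with $v(5\cdot 71)=4$) and each of the $24$ primes over $71$ (residue degree $1$, with $v(5\cdot 71)=1$) are tamely totally ramified in $F/\mathbb{Q}(\zeta_{35})$ and so contribute different exponent $7-1=6$; the unique prime over $7$ (residue degree $4$) is wildly ramified, with different exponent $12$ coming from the $7$-adic expansion of $5\cdot 71$. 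Assembling the contributions, $|d_F|=5^{126}7^{140}\cdot(5^{6})^{6}\cdot 71^{6\cdot 24}\cdot(7^{4})^{12}=5^{162}\,7^{188}\,71^{144}$, whence the root discriminant is $|d_F|^{1/168}\approx 1608.8$.

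The genuine obstacle, I expect, is the $7$-adic point: because $m=7$ is simultaneously the Kummer exponent and a divisor of $n=35$, the prime $7$ is wildly ramified throughout, so both the unramifiedness of $F(\sqrt[7]{71})/F$ above $7$ and the wild different exponent there require explicit local class field theory, in contrast to the tame bookkeeping that disposes of $5$ and $71$. The remaining ingredients — the congruences $71\equiv 1\pmod{35}$ and $5^{10}\equiv 1\pmod{71}$, the Golod--Shafarevich count, and the tame parts of the different — are routine.
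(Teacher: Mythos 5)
Your overall strategy coincides with the paper's (pass to $E=\mathbb{Q}(\zeta_{35},\sqrt[7]{5},\sqrt[7]{71})$, show $E/\mathbb{Q}(\zeta_{35},\sqrt[7]{5\cdot 71})$ is unramified of degree $7$, and get infinitude of the $7$-tower of $E$ from a genus-theory rank bound plus Golod--Shafarevich), and your numerical checks ($71\equiv 1 \bmod 35$, $5^{10}\equiv 1 \bmod 71$, the rank $\geq 83$ versus threshold $\approx 50.5$, and the root discriminant) are consistent with the paper. However, there is a genuine gap exactly at the point you flag and then leave as ``must be checked by an explicit local computation'': the unramifiedness of $E/F$ at the prime above $7$. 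Note that you cannot obtain this by invoking Theorem \ref{thm 1.1}/Proposition \ref{prop 3.4} as stated, because $71$ does \emph{not} satisfy the required splitting condition there ($71\equiv 22 \bmod 49$, so $71$ does not split completely in $\mathbb{Q}(\zeta_{7^2})$, hence not in $H(\zeta_{n\cdot P(n)})$); this is precisely why the paper proves the corollary by the modified argument of Corollary \ref{cor 3.9} rather than by quoting the theorem. The missing step is the analogue of the congruence $3\cdot 41^2\equiv 13^5 \pmod{5^3}$: one must exhibit an exponent $a\not\equiv 1 \pmod 7$ with $5\cdot 71^{a}$ a $7$-adic $7$-th power. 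Concretely, $a=2$ works: $5\cdot 71^{2}\equiv 19\equiv 5^{7}\pmod{7^{2}}$ (equivalently $19^{6}\equiv 1 \pmod{49}$), so by Lemma \ref{lem 3.2}(I) one has $5\cdot 71^{2}\in\mathbb{Q}_7^{\times 7}$, hence $\mathbb{Q}(\zeta_{35},\sqrt[7]{5\cdot 71^{2}})/\mathbb{Q}(\zeta_{35})$ is unramified above $7$, and since $E$ is the compositum of $F=\mathbb{Q}(\zeta_{35},\sqrt[7]{5\cdot 71})$ with $\mathbb{Q}(\zeta_{35},\sqrt[7]{5\cdot 71^{2}})$, the extension $E/F$ is unramified above $7$ (the primes above $5$ and $71$ being handled tamely as you say). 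Without this computation the proof is incomplete, since for a ``generic'' pair $(p,q)$ with $q\equiv 1 \bmod n$ the prime above $7$ genuinely can ramify in $E/F$.

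A secondary, smaller issue: in the root-discriminant computation you assert the wild different exponent $12$ at the prime above $7$ in $F/\mathbb{Q}(\zeta_{35})$ without justification. The resulting value $|d_F|^{1/168}\approx 1608.8$ agrees with the paper's figure, but this exponent also requires the same kind of local analysis at $7$ (or a computer verification, as the paper does via \cite{sagemath}); as written it is an unproved claim rather than ``routine bookkeeping.'' Aside from these two points at the prime $7$, your argument (in particular running the genus-theory/Golod--Shafarevich count directly over $\mathbb{Q}(\zeta_{35},\sqrt[7]{5})$ instead of over its Hilbert class field, which is harmless here because the class-number condition is vacuous) is a correct rendering of the paper's method.
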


\section{The extension of Shcoof's Criterion}

Let $K$ be a number field, with $\mathcal{O}_{K}$ denoting its ring of integers, $E_{K}$ the units of this ring, $C l_{K}$ the class group of $\mathcal{O}_{K}$, and $h_{K}$ the order of $C l_{K}$. Suppose \(K/k\) is a finite Galois extension with Galois group \(\Delta =\text{Gal}(K/k)\), and \(\mathfrak{p}\) is a prime of \(k\). Given a \(\mathbb{Z}\)-module \(M\) and rational prime number \(p\), we denote by \(d_p(M)\) the dimension over \(\mathbb{F}_p\) of \(\mathbb{F}_p \otimes_{\mathbb Z} M\). In 1986, Schoof proved the following Classic conclusion.

\begin{theorem} [Schoof, \cite{schoof1986infinite}]
Suppose \(\Delta\) is a cyclic group of prime order \(p\), and let $\rho$ denote the number of primes (both finite and infinite) of $K$ that ramify in $k$. Then $K$ has infinite $p$-class field tower if
$$
\rho \geq 3+d_p\left(E_k /\left(E_k \cap N_{U_K / U_k} U_K\right)\right)+2 \sqrt{d_p\left(E_K\right)+1},
$$  
where  $U_{K}$ is the idèle units of  \(K\).
\end{theorem}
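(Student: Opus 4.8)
\textit{Proof proposal.} The plan is to apply the Golod--Shafarevich inequality to the Galois group of the maximal unramified $p$-extension of $K$, the hypothesis that $\Delta=\mathrm{Gal}(K/k)$ is cyclic of prime degree $p$ being used only to force the genus group of $K/k$ to be large. Let $\Omega$ be the maximal unramified pro-$p$ extension of $K$ and set $H=\mathrm{Gal}(\Omega/K)$; since $\Omega$ is canonically attached to $K$, the extension $\Omega/k$ is Galois, $H$ is normal of index $p$ in the pro-$p$ group $G=\mathrm{Gal}(\Omega/k)$, and $K$ has infinite $p$-class field tower if and only if $H$ is infinite. Recalling that $d_p H^1(H,\mathbb{F}_p)=d_p(\mathrm{Cl}_K)$, the statement will follow once one shows
$$
d_p H^2(H,\mathbb{F}_p)\ \le\ \tfrac14\, d_p(\mathrm{Cl}_K)^2 ,
$$
since, by Golod--Shafarevich, no nontrivial finite $p$-group satisfies this, so $H$ must be infinite.

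First I would bound $d_p(\mathrm{Cl}_K)$ from below by genus theory. Chevalley's ambiguous class number formula for the cyclic degree-$p$ extension $K/k$, with $\rho$ the number of (finite and infinite) places of $k$ ramifying in $K$ (each of ramification index $p$), reads
$$
\#\,\mathrm{Cl}_K^{\Delta}\ =\ \frac{h_k\cdot p^{\rho}}{p\cdot [\,E_k : E_k\cap N_{K/k}(\mathbb{A}_K^{\times})\,]} .
$$
Because $K/k$ is cyclic, the Hasse norm theorem identifies the subgroup of units of $k$ that are global norms with the subgroup of units that are local norms everywhere; and since at a ramified place a unit can only be the norm of a unit, this subgroup equals $E_k\cap N_{U_K/U_k}U_K$. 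The key point is that, precisely because $[K:k]$ is the prime $p$, the genus group $\mathrm{Gal}(K^{\mathrm{gen}}/H_kK)$ (with $K^{\mathrm{gen}}$ the genus field and $H_k$ the Hilbert class field of $k$) is elementary abelian of order $p^{\rho-1-D}$, where $D:=d_p\!\big(E_k/(E_k\cap N_{U_K/U_k}U_K)\big)$. As it is a subquotient of $\mathrm{Cl}_K$, this yields $d_p(\mathrm{Cl}_K)\ge\rho-1-D$.

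Next I would bound the relation rank $d_p H^2(H,\mathbb{F}_p)$ from above by Shafarevich's estimate for the Galois group of the maximal unramified $p$-extension (see Koch's book on $p$-extensions, or Neukirch--Schmidt--Wingberg): $d_p H^2(H,\mathbb{F}_p)\le d_p H^1(H,\mathbb{F}_p)+d_p(E_K)=d_p(\mathrm{Cl}_K)+d_p(E_K)$. Feeding this into the displayed inequality, $H$ is infinite as soon as $d_p(\mathrm{Cl}_K)+d_p(E_K)\le\tfrac14 d_p(\mathrm{Cl}_K)^2$, i.e. as soon as $d_p(\mathrm{Cl}_K)\ge 2+2\sqrt{d_p(E_K)+1}$; combined with $d_p(\mathrm{Cl}_K)\ge\rho-1-D$, it therefore suffices that $\rho-1-D\ge 2+2\sqrt{d_p(E_K)+1}$, which is exactly the hypothesis $\rho\ge 3+D+2\sqrt{d_p(E_K)+1}$.

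The crux I expect is the genus-theoretic lower bound: the ambiguous class number formula only controls the \emph{order} of $\mathrm{Cl}_K^{\Delta}$, whereas what is needed is that $\mathrm{Cl}_K$ possesses $\rho-1-D$ \emph{independent} elements of order $p$, and the elementary-abelian-ness of the genus group is exactly where cyclicity of prime degree enters. A secondary technical point is matching the norm index coming out of class field theory with the unit norm index $E_k\cap N_{U_K/U_k}U_K$ appearing in the statement; the Shafarevich relation-rank bound and the sharp form of the Golod--Shafarevich inequality are invoked as black boxes.
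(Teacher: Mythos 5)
Your proposal is correct, and it proves the stated criterion; note that the paper itself only cites this result of Schoof, its own machinery being Proposition \ref{Prop:1} plus Theorem \ref{Thm:2}. Your second half is exactly Theorem \ref{Thm:2} (you re-derive it from the Golod--Shafarevich inequality together with Shafarevich's relation-rank bound $d_pH^2\le d_pH^1+d_p(E_K)$, which is precisely how the Cassels--Fr\"ohlich reference proves it), so the real difference is in how you reach the key inequality $d_p(Cl_K)\ge \rho-1-D$ with $D=d_p\bigl(E_k/(E_k\cap N_{U_K/U_k}U_K)\bigr)$: you use Chevalley's ambiguous class number formula and the genus field, whereas the paper's route (Proposition \ref{Prop:1}, i.e.\ Schoof's Proposition 3.1) packages the same information cohomologically, writing $\hat H^{0}(\Delta,U_K)$ as a product of local terms so that each of the $\rho$ ramified places contributes an inertia factor of order $p$, and then subtracting $d_p\Delta^{\mathrm{ab}}=1$ and $D$. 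Your genus-theoretic route is more classical and self-contained for a cyclic extension of prime degree, but it is tied to that case; the cohomological formulation is what lets the authors generalize to arbitrary $\Delta$ in Proposition \ref{prop 2.4}. Two points in your sketch deserve to be made explicit, and both go through: (i) $[E_k:E_k\cap N_{U_K/U_k}U_K]=p^{D}$ because $p$-th powers of units are norms, so the quotient is elementary abelian; and (ii) the passage from the order of $Cl_K^{\Delta}$ to a \emph{rank} bound requires exactly what you flag: since $K^{\mathrm{gen}}/K$ is unramified, every inertia subgroup of the abelian group $\operatorname{Gal}(K^{\mathrm{gen}}/k)$ injects into $\operatorname{Gal}(K/k)\cong\mathbb{Z}/p$, so the subgroup they generate, which cuts out the maximal subextension unramified over $k$, is elementary abelian; together with $H_k\cap K=k$ (automatic here because the hypothesis forces $\rho\ge 3$, so $K/k$ is ramified and of prime degree) this gives $\operatorname{Gal}(K^{\mathrm{gen}}/H_kK)$ elementary abelian of order $p^{\rho-1-D}$, hence $d_p(Cl_K)\ge\rho-1-D$ as a quotient of $Cl_K$. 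With these details filled in, your argument is a complete and valid alternative proof.
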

    
In this section, we generalize Schoof's criterion. We first review two basic facts of \(d_p Cl_K\). 

\begin{pro}[\cite{schoof1986infinite}, Proposition 3.1] \label{Prop:1} 
The p-rank of  $C l_{K}$  satisfies
\begin{center}
$    d_{p} C l_{K} \geq d_{p} \hat{H}^{0}\left(\Delta, U_{K}\right)-d_{p} \Delta /[\Delta, \Delta]-d_{p} E_{k} /\left(E_{k} \cap N_{U_K /U_k} U_{K}\right) .$
\end{center}
\end{pro}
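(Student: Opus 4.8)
The plan is to compute $d_pCl_K$ not on $K$ directly but, after pushing forward along the idèle norm, over the base field $k$, where the right‑hand side is visibly a genus‑theoretic quantity. Throughout write $J_K,J_k$ for the idèle groups, $C_k=J_k/k^{*}$ for the idèle class group, and recall the idelic descriptions $Cl_K\cong J_K/(K^{*}U_K)$ and $Cl_k\cong J_k/(k^{*}U_k)$. The field norm induces $N_{K/k}\colon J_K\to J_k$; it is compatible with the diagonal embeddings (so it maps $K^{*}$ into $k^{*}$) and it carries $U_K$ into $U_k$, hence it descends to a homomorphism
\[
\overline N\colon Cl_K\longrightarrow G:=J_k/\bigl(k^{*}\,N_{U_K/U_k}U_K\bigr).
\]

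First I would study $\overline N$. Its image is a quotient of $Cl_K$, so $d_p(\operatorname{im}\overline N)\le d_pCl_K$. Its cokernel is $J_k/(k^{*}N_{K/k}J_K)=C_k/N_{K/k}C_K$, which by global class field theory is isomorphic to $\Delta/[\Delta,\Delta]$; this is the one genuinely non‑elementary input, resting on the norm limitation theorem (the norm subgroup of a finite Galois extension inside $C_k$ has index $|\Delta/[\Delta,\Delta]|$). From $0\to\operatorname{im}\overline N\to G\to\Delta/[\Delta,\Delta]\to 0$ and subadditivity of $d_p$ I then get $d_pCl_K\ge d_p(\operatorname{im}\overline N)\ge d_pG-d_p\bigl(\Delta/[\Delta,\Delta]\bigr)$.

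Next I would bound $d_pG$ from below. Since $N_{U_K/U_k}U_K\subseteq U_k$, there is a surjection $G\twoheadrightarrow J_k/(k^{*}U_k)=Cl_k$ whose kernel is $(k^{*}U_k)/(k^{*}N_{U_K/U_k}U_K)\cong U_k/\bigl(E_k\cdot N_{U_K/U_k}U_K\bigr)$, using $k^{*}\cap U_k=E_k$; hence $d_pG$ is at least $d_p$ of this kernel. Finally, checking place by place that $U_K^{\Delta}=U_k$ identifies $U_k/N_{U_K/U_k}U_K$ with $\hat H^{0}(\Delta,U_K)$, and $U_k/\bigl(E_k\,N_{U_K/U_k}U_K\bigr)$ is the quotient of $\hat H^{0}(\Delta,U_K)$ by the image of $E_k$, which is a homomorphic image of $E_k/(E_k\cap N_{U_K/U_k}U_K)$. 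Therefore $d_p$ of the kernel is at least $d_p\hat H^{0}(\Delta,U_K)-d_p\bigl(E_k/(E_k\cap N_{U_K/U_k}U_K)\bigr)$, and chaining the three estimates yields the asserted inequality.

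The only real obstacle is the class field theory fact about $\operatorname{coker}\overline N$ quoted above; everything else is formal, using repeatedly that for finite abelian groups $d_p$ does not increase under passing to subgroups or quotients and is subadditive on short exact sequences, together with the routine (but worth doing carefully) identifications: compatibility of the idelic norm with diagonal embeddings, the computation $U_K^{\Delta}=U_k$, and the resulting description of $\hat H^{0}(\Delta,U_K)$ as $U_k/N_{U_K/U_k}U_K$. A minor point to keep straight is the direction of each inequality — which groups occur as sub‑objects and which as quotients — so that the $d_p$‑bounds compose in the right sense.
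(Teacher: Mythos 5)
Your argument is correct: the chain $d_pCl_K \ge d_p(\operatorname{im}\overline{N}) \ge d_pG - d_p\bigl(\Delta/[\Delta,\Delta]\bigr) \ge d_p\hat{H}^0(\Delta,U_K) - d_p\bigl(E_k/(E_k\cap N_{U_K/U_k}U_K)\bigr) - d_p\bigl(\Delta/[\Delta,\Delta]\bigr)$ goes through, the only non-formal input being $C_k/N_{K/k}C_K\cong\Delta/[\Delta,\Delta]$ (Artin reciprocity plus norm limitation), and the subgroup-monotonicity step $d_p$ of the kernel $\le d_pG$ is legitimate since $G$ is finite (that kernel is a quotient of the finite group $\hat{H}^0(\Delta,U_K)$ and the cokernel is $Cl_k$). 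The paper gives no proof of this proposition — it is quoted from Schoof — and your idelic computation with $G=J_k/\bigl(k^{*}N_{U_K/U_k}U_K\bigr)$ (the norm group cut out by the genus field) is essentially Schoof's own genus-theoretic argument, so the approach matches the cited source.
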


\begin{theorem}[\cite{cassels1967algebraic}, p.235]\label{Thm:2}  
K  has an infinite  \(p\)-class field tower if 
\[d_{p} C l_{K} \geq 2+2 \sqrt{d_{p} E_{K}+1}.\] 
\end{theorem}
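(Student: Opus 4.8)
\emph{Proof strategy.} The plan is to translate the finiteness of the $p$-class field tower into the finiteness of a pro-$p$ group, and then to play an upper bound on that group's relation rank against the Golod--Shafarevich inequality. Let $M$ be the maximal everywhere-unramified pro-$p$ extension of $K$ (unramified at every finite prime and at every archimedean place) and set $G=\mathrm{Gal}(M/K)$. Since every finite unramified $p$-extension of $K$ has, over $K$, a solvable Galois closure, $M$ is the union of the successive layers $K_i$ of the $p$-class field tower, and hence the tower is infinite if and only if $G$ is infinite. So it suffices to show that, under the hypothesis $d_p\,\mathrm{Cl}_K\ge 2+2\sqrt{d_pE_K+1}$, the group $G$ cannot be finite. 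Suppose it were; then $G$ is a finite $p$-group, and it is nontrivial because the hypothesis forces $d_p\,\mathrm{Cl}_K\ge 4$.

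Next I would compute the generator and relation ranks $d(G)=\dim_{\mathbb F_p}H^1(G,\mathbb F_p)$ and $r(G)=\dim_{\mathbb F_p}H^2(G,\mathbb F_p)$. By class field theory the abelianization $G^{\mathrm{ab}}$ is the Galois group of the $p$-Hilbert class field of $K$, canonically the $p$-part of $\mathrm{Cl}_K$, so $d(G)=d_p\,\mathrm{Cl}_K$. For $r(G)$ one analyses a minimal presentation of $G$ cohomologically: because $M/K$ is unramified everywhere no local ramification terms enter $H^2$, and the only global arithmetic input is the unit group $E_K$. The resulting bound -- a standard fact about presentations of the Galois group of an everywhere-unramified pro-$p$ extension -- is
\[
r(G)\ \le\ d(G)+d_pE_K .
\]

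Then I would invoke the Golod--Shafarevich inequality \cite{golod1964class}: a nontrivial finite $p$-group satisfies $r(G)>\tfrac14 d(G)^2$. Combining it with the estimate above gives $\tfrac14 d(G)^2<d(G)+d_pE_K$, equivalently $\bigl(d(G)-2\bigr)^2<4\bigl(d_pE_K+1\bigr)$, and hence $d(G)<2+2\sqrt{d_pE_K+1}$. Since $d(G)=d_p\,\mathrm{Cl}_K$, this contradicts the hypothesis; therefore $G$ is infinite and the $p$-class field tower of $K$ is infinite.

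I expect the relation-rank inequality $r(G)\le d(G)+d_pE_K$ to be the only genuinely delicate point: it is the one step resting on global class field theory -- Poitou--Tate duality and the structure of the id\`ele-class module -- rather than on formal group theory, and one must get the unit contribution exactly right (it is $d_pE_K$, not something larger) for the numerical bound to come out as stated. The reduction to finiteness of $G$, the identification $d(G)=d_p\,\mathrm{Cl}_K$, and the manipulation of the Golod--Shafarevich bound are all routine.
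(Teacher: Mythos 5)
The paper states this result as a citation (Cassels--Fr\"ohlich, p.~235 --- Roquette's article on class field towers) and offers no proof of its own, so there is no in-paper argument to compare against. Your sketch is the standard proof from that source and is correct: finiteness of the $p$-tower is equivalent to finiteness of $G=\mathrm{Gal}(M/K)$ for $M$ the maximal everywhere-unramified pro-$p$ extension; class field theory gives $d(G)=d_p\,Cl_K$; Shafarevich's relation-rank theorem for the everywhere-unramified pro-$p$ extension gives $r(G)\le d(G)+d_p E_K$, where the unit contribution is exactly $d_pE_K=r_1+r_2-1+\delta$ (with $\delta=1$ iff $\zeta_p\in K$), matching the global term in Shafarevich's formula; and the Golod--Shafarevich inequality in the sharp Gasch\"utz--Vinberg form $r(G)>\tfrac14 d(G)^2$ for a nontrivial finite $p$-group then yields the contradiction. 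You also correctly note that the hypothesis forces $d_p\,Cl_K\ge 4$, so $d(G)-2>0$ and the square-root extraction in the final manipulation is legitimate. No gaps.
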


Now, let $S$ be the set of all (finite and infinite) primes of $k$, $R$ be the set of all finite primes that are ramified in $K$, and let $S_{\infty}$ denote the set of infinite primes of $k$. Fixed a prime $\mathfrak{P}$ of $K$ lying over $\mathfrak{p}\in S$, we denote by $I_{\mathfrak{P}}(K/k)$ and $D_{\mathfrak{P}}(K/k)$ the inertia group and decomposition group of $\mathfrak{P}$ in $\operatorname{Gal}(K/k)$.

\begin{pro}\label{prop 2.4}
Let \(p\) be a rational number such that \(p \mid |\Delta|\). We set $\delta_{p}=1$ if $p = 2$ and 0 otherwise. Let $G'$ be the commutator subgroup of $G$. Then $K$  has an infinite $p$-class field tower if
\[
\sum_{\substack{\mathfrak{p} \in R}} d_{p}I_{\mathfrak{P}}(K / k)/{D^{\prime}_{\mathfrak{P}}(K / k)}\geq d_{p}\Delta/[\Delta,\Delta]+2-r_1^{ram}(k)\delta_{p}+L_{p}
\]
where \[ L_p = d_p\left(E_k /\left(E_k \cap N_{U_K / U_k} U_K\right)\right)+2 \sqrt{d_p\left(E_K\right)+1}.\]
    \begin{proof}

By proposition \ref{Prop:1} and theorem \ref{Thm:2}, it suffices to show that: \begin{equation*}
    d_{p} \hat{H}^{0}\left(\Delta, U_{K}\right)\geq\sum_{\substack{\mathfrak{p} \in R}}d_{p}I_{\mathfrak{P}}(K / k)/{D^{'}_{\mathfrak{P}}(K / k)}+r_1^{ram}(k)d_{{p}} (\mathbb{Z} / 2 \mathbb{Z})
\end{equation*}  From the same argument as in the proof of (\cite{neukirch2013cohomology},8.1.2), we have 
\begin{equation*}
    \hat{H}^{0}\left(\Delta, U_{K}\right)=\prod_{\mathfrak{p} \in S_{\infty}} \hat{H}^{0}\left(G\left(K_{\mathfrak{P}} /k_{\mathfrak{p}}\right), K_{\mathfrak{P}}^{\times}\right) \prod_{\mathfrak{p} \in S\backslash S_{\infty}} \hat{H}^{0}\left(G\left(K_{\mathfrak{P}} / k_{\mathfrak{p}}\right), U_{\mathfrak{P}}\right).
\end{equation*}
If  $K_{\mathfrak{P}} / k_{\mathfrak{p}}$ is unramified, the unit group  $U_{\mathfrak{P}}$ is cohomologically trivial $G\left(K_{\mathfrak{P}} / k_{\mathfrak{p}}\right)$-modules. Hence,
\begin{equation*}
    \hat{H}^{0}(\Delta, U_{K})=\prod_{\mathfrak{p} \in S_{\infty}} \hat{H}^{0}\left(G\left(K_{\mathfrak{P}} /k_{\mathfrak{p}}\right), K_{\mathfrak{P}}^{\times}\right) \prod_{\substack{\mathfrak{p}\in R}}\hat{H}^{0}(G\left(K_{\mathfrak{P}} / k_{\mathfrak{p}}\right), U_{\mathfrak{P}})
\end{equation*}
By (\cite{gras2003class}, Theorem 1.4),
\[\hat{H}^{0}\left(G\left(K_{\mathfrak{P}} / k_{\mathfrak{p}}\right), U_{\mathfrak{P}}\right)=I\left(K^{Abel}_{\mathfrak{P}} / k_{\mathfrak{p}}\right)=I_{\mathfrak{P}}(K / k)/{D^{\prime}_{\mathfrak{P}}(K / k)},
\]
and 
\begin{equation*}
    d_{p}\hat{H}^{0}\left(G\left(K_{\mathfrak{P}} / k_{\mathfrak{p}}\right), U_{\mathfrak{P}}\right)= d_{p}I_{\mathfrak{P}}(K / k)/{D^{\prime}_{\mathfrak{P}}(K / k)}.
\end{equation*}
Therefore, 
\begin{align*}
     d_{p}\hat{H}^{0}\left(\Delta, U_{K}\right)&=\sum_{\mathfrak{p} \in S_{\infty}} d_{p}\hat{H}^{0}\left(G\left(K_{\mathfrak{P}} /k_{\mathfrak{p}}\right), K_{\mathfrak{P}}^{\times}\right) \\&+\sum_{\substack{p\in R}}d_{p}\hat{H}^{0}\left(G\left(K_{\mathfrak{P}} / k_{\mathfrak{p}}\right), U_{\mathfrak{P}}\right)\\
&\geq \sum_{\substack{\mathfrak{p}\in R}}d_{p}I_{\mathfrak{P}}(K / k)/{D^{\prime}_{\mathfrak{P}}(K / k)} +r_1^{ram} \cdot (k)d_{p}\mathbb{Z}/2\mathbb{Z}
\end{align*}
\end{proof}

\end{pro}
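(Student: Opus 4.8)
The plan is to feed the hypothesis into the two facts recalled above. By Proposition~\ref{Prop:1},
\[
d_p Cl_K \;\geq\; d_p\hat{H}^0(\Delta,U_K) - d_p\,\Delta/[\Delta,\Delta] - d_p\bigl(E_k/(E_k\cap N_{U_K/U_k}U_K)\bigr),
\]
and by Theorem~\ref{Thm:2} it suffices to make the right-hand side at least $2+2\sqrt{d_p E_K+1}$. Rearranging and recalling that $L_p=d_p\bigl(E_k/(E_k\cap N_{U_K/U_k}U_K)\bigr)+2\sqrt{d_p E_K+1}$, the whole statement reduces to the single inequality
\[
d_p\hat{H}^0(\Delta,U_K)\;\geq\;\sum_{\mathfrak{p}\in R} d_p\, I_{\mathfrak{P}}(K/k)/D'_{\mathfrak{P}}(K/k)\;+\;r_1^{ram}(k)\,\delta_p ,
\]
because subtracting $d_p\,\Delta/[\Delta,\Delta]$ and $d_p\bigl(E_k/(E_k\cap N_{U_K/U_k}U_K)\bigr)$ from both sides and inserting the hypothesis (using $\delta_p=d_p(\mathbb{Z}/2\mathbb{Z})$) yields precisely $d_p Cl_K\geq 2+2\sqrt{d_p E_K+1}$. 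So the entire content is to compute, or at least bound below, $d_p\hat{H}^0(\Delta,U_K)$.

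To do this I would break the idèle unit module into local pieces. As in \cite{neukirch2013cohomology}, 8.1.2, the Tate cohomology $\hat{H}^0(\Delta,U_K)$ decomposes as a product over the places $\mathfrak{p}$ of $k$ of the local groups $\hat{H}^0(G(K_{\mathfrak{P}}/k_{\mathfrak{p}}),U_{\mathfrak{P}})$, with $K_{\mathfrak{P}}^\times$ replacing $U_{\mathfrak{P}}$ at the archimedean places. The finite \emph{unramified} places drop out, since there $U_{\mathfrak{P}}$ is a cohomologically trivial $G(K_{\mathfrak{P}}/k_{\mathfrak{p}})$-module. For a ramified finite prime I would apply Gras's identification (\cite{gras2003class}, Theorem~1.4) $\hat{H}^0(G(K_{\mathfrak{P}}/k_{\mathfrak{p}}),U_{\mathfrak{P}})\cong I(K^{Abel}_{\mathfrak{P}}/k_{\mathfrak{p}})=I_{\mathfrak{P}}(K/k)/D'_{\mathfrak{P}}(K/k)$, which accounts for the sum over $R$. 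At the archimedean places the only nonzero contributions come from the $r_1^{ram}(k)$ real places of $k$ that become complex in $K$, each contributing $\hat{H}^0(\mathbb{Z}/2\mathbb{Z},\mathbb{C}^\times)=\mathbb{R}^\times/\mathbb{R}_{>0}\cong\mathbb{Z}/2\mathbb{Z}$, hence $d_p=\delta_p$. Since $d_p$ is additive on finite direct products, adding these up gives $d_p\hat{H}^0(\Delta,U_K)=\sum_{\mathfrak{p}\in R} d_p\,I_{\mathfrak{P}}(K/k)/D'_{\mathfrak{P}}(K/k)+r_1^{ram}(k)\,\delta_p$ — an equality, so the needed inequality holds comfortably.

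I expect the delicate points to be the two structural invocations rather than any computation. First, one must be careful that $U_K$ is genuinely the (restricted) product of the $U_{\mathfrak{P}}$ as a $\Delta$-module and that $\hat{H}^0$ interacts with this product as in \cite{neukirch2013cohomology}, so that the unramified places can legitimately be thrown away. Second — and this is the real crux — one must extract the \emph{exact} group $I_{\mathfrak{P}}(K/k)/D'_{\mathfrak{P}}(K/k)$ from Gras's theorem, not merely a crude bound: it is this refinement, valid for an arbitrary Galois group $\Delta$ rather than just one of prime order, that upgrades Schoof's original criterion. The remaining ingredients — cohomological triviality of units at unramified primes, the archimedean computation $\hat{H}^0(\mathbb{Z}/2\mathbb{Z},\mathbb{C}^\times)\cong\mathbb{Z}/2\mathbb{Z}$, and the final algebraic rearrangement — are standard.
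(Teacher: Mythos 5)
Your proof is correct and follows essentially the same route as the paper: reduce via Proposition~\ref{Prop:1} and Theorem~\ref{Thm:2} to a lower bound on $d_p\hat{H}^0(\Delta,U_K)$, decompose into local factors as in \cite{neukirch2013cohomology}~8.1.2, discard unramified finite places by cohomological triviality, apply Gras's identification at the ramified finite places, and compute $\hat{H}^0(\mathbb{Z}/2\mathbb{Z},\mathbb{C}^\times)\cong\mathbb{Z}/2\mathbb{Z}$ at the ramified real places. Your observation that the resulting bound is in fact an equality is a small sharpening of the paper's inequality, but does not alter the argument.
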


\section{The proof of the Main Theorem}

\subsection{Construct the unramified extension}
In this subsection, we set \(m\geq 2\) be an integer. 
Our construction is analogous to that of Schoof \cite{schoof1986infinite}, Theorem 3.4. The following proposition will be used to construct the unramified extension of certain fields for proving Theorem \ref{thm 1.1}.

\begin{theorem} [\cite{schinzel1977abelian}, Theorem 2] \label{thm 3.1}
Let $k$ be a field, and $m$ be a natural such that char $(k) \nmid m$. Denote with $\omega_m$ the number of the $m$-roots of unity in $k$. Then $\operatorname{Gal}\left(k\left(\zeta_m, \sqrt[m]{a}\right) / k\right)$ is abelian if and only if
$$
a^{\omega_m}=r^m \text { for some } r \in k \text {. }
$$  
\end{theorem}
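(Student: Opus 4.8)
The plan is to compute $G:=\operatorname{Gal}(L/k)$ for $L:=k(\zeta_m,\sqrt[m]{a})$ explicitly enough to restate ``$G$ is abelian'' as a congruence, and to match that congruence with $a^{\omega_m}\in(k^\times)^m$. Fix a primitive $m$-th root of unity $\zeta=\zeta_m$ and a root $\alpha$ of $x^m-a$. Since $\operatorname{char}(k)\nmid m$, the polynomial $(x^m-1)(x^m-a)$ is separable and $L$ is its splitting field, so $L/k$ is Galois and $G$ embeds in the holomorph $(\mathbb{Z}/m\mathbb{Z})\rtimes(\mathbb{Z}/m\mathbb{Z})^\times$ via $\sigma\mapsto(c(\sigma),t(\sigma))$, where $\sigma(\alpha)=\zeta^{c(\sigma)}\alpha$ and $\sigma(\zeta)=\zeta^{t(\sigma)}$; thus $t$ is a homomorphism and $c$ a $1$-cocycle, $c(\sigma\rho)=c(\sigma)+t(\sigma)c(\rho)$. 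A direct commutator computation in the holomorph shows that $[\sigma,\rho]$ fixes $\zeta$ and sends $\alpha$ to $\zeta^{\,c(\sigma)(1-t(\rho))+c(\rho)(t(\sigma)-1)}\alpha$, so
$$G\text{ is abelian}\iff c(\sigma)\bigl(1-t(\rho)\bigr)+c(\rho)\bigl(t(\sigma)-1\bigr)\equiv 0\pmod m\ \ \text{for all }\sigma,\rho\in G.\qquad(\star)$$
We use throughout that $\zeta_{\omega_m}\in k$, so $t(\sigma)\equiv 1\pmod{\omega_m}$ for all $\sigma$; writing $t(\sigma)=1+\omega_m f_\sigma$, the left side of $(\star)$ becomes $\omega_m\bigl(c(\rho)f_\sigma-c(\sigma)f_\rho\bigr)$.

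For the ``if'' direction, suppose $a^{\omega_m}=r^m$ with $r\in k$. Then $(\alpha^{\omega_m}/r)^m=1$, so $\alpha^{\omega_m}=r\zeta^{\ell}$ for some integer $\ell$ (in particular $\alpha^{\omega_m}\in k(\zeta_m)$). Applying an arbitrary $\sigma\in G$ to $\alpha^{\omega_m}=r\zeta^{\ell}$ and comparing with $\sigma(\alpha^{\omega_m})=\zeta^{\omega_m c(\sigma)}\alpha^{\omega_m}$ gives $c(\sigma)\equiv \ell f_\sigma\pmod{m/\omega_m}$. Substituting this into $\omega_m\bigl(c(\rho)f_\sigma-c(\sigma)f_\rho\bigr)$ shows the left side of $(\star)$ vanishes modulo $m$, so $G$ is abelian.

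For the ``only if'' direction, assume $(\star)$. Passing to the quotient $\operatorname{Gal}(k(\zeta_{p^e},\sqrt[p^e]{a})/k)$ reduces us to the case $m=p^e$ a prime power (one then recombines the prime-power cases, using $\omega_m=\prod_{p^e\|m}\omega_{p^e}$). Write $\omega_m=p^v$, so the modulus $m/\omega_m$ equals $p^{e-v}$. Taking $\rho\in N:=\operatorname{Gal}(L/k(\zeta_m))$ in $(\star)$ (so $t(\rho)=1$ while $c(\rho)$ runs over $(m/|N|)\mathbb{Z}/m\mathbb{Z}$) forces $t(\sigma)\equiv 1\pmod{|N|}$ for all $\sigma$, hence $\zeta_{|N|}\in k$, $|N|\mid\omega_m$, and $c(N)\subseteq(m/\omega_m)\mathbb{Z}/m\mathbb{Z}$. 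It now suffices to produce an integer $\ell$ with $c(\sigma)\equiv \ell f_\sigma\pmod{m/\omega_m}$ for all $\sigma$: then $\alpha^{\omega_m}\zeta^{-\ell}$ is $G$-fixed, hence lies in $k$, and its $m$-th power is $a^{\omega_m}$. Unwinding $(\star)$ as $c(\rho)f_\sigma\equiv c(\sigma)f_\rho\pmod{p^{e-v}}$ says the pairs $(c(\sigma),f_\sigma)$ are pairwise proportional in the local ring $\mathbb{Z}/p^{e-v}\mathbb{Z}$; if some $f_{\sigma_0}$ is a unit there, take $\ell:=c(\sigma_0)f_{\sigma_0}^{-1}$. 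And some $f_{\sigma_0}$ is a unit, for otherwise $p\mid f_\sigma$ for all $\sigma$, forcing $t(\sigma)\equiv 1\pmod{p^{v+1}}$ for all $\sigma$, hence $\zeta_{p^{v+1}}\in k$, contradicting $\omega_m=p^v$.

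I expect the last step to be the main obstacle: converting the cocycle relation $(\star)$ into a genuine proportionality, ``$c$ is a scalar multiple of $f$''. The reduction to prime-power $m$ is what makes this work, since over a composite modulus $m/\omega_m$ the coefficient ring has zero divisors and pairwise-proportional pairs need not be collinear through the origin; over $\mathbb{Z}/p^{e-v}\mathbb{Z}$ the argument closes, and the precise value $\omega_m=p^v$ enters exactly through the count of roots of unity forced to lie in $k$.
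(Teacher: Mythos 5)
The paper does not prove this statement at all: it is quoted verbatim as Theorem 2 of Schinzel's 1977 paper and used as a black box, so there is no internal proof to compare against. Your argument is a correct, self-contained proof. The embedding $\sigma\mapsto(c(\sigma),t(\sigma))$ into the holomorph, the commutator formula, and the resulting criterion $(\star)$ are all right, as is the observation $t(\sigma)\equiv 1\pmod{\omega_m}$ coming from $\zeta^{m/\omega_m}\in k$. The ``if'' direction (writing $\alpha^{\omega_m}=r\zeta^{\ell}$ and deducing $c(\sigma)\equiv\ell f_\sigma\pmod{m/\omega_m}$) checks out, and in the ``only if'' direction the key step — that $(\star)$ forces the pairs $(c(\sigma),f_\sigma)$ to be genuinely proportional once $m$ is a prime power, because some $f_{\sigma_0}$ must be a unit in $\mathbb{Z}/p^{e-v}\mathbb{Z}$ (else $\zeta_{p^{v+1}}\in k$, contradicting $\omega_m=p^v$) — is sound, and the element $\alpha^{\omega_m}\zeta^{-\ell}$ is indeed $G$-fixed with $m$-th power $a^{\omega_m}$. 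Two spots are compressed but harmless: the case $\omega_m=m$ (modulus $m/\omega_m=1$) should be flagged as trivial, since there $a=\alpha^m\cdot$(unit adjustments) reduces to $r=a$ and the ``some $f_{\sigma_0}$ is a unit'' discussion is vacuous; and the recombination of the prime-power cases deserves one more line, namely that $\omega_{p^e}\mid\omega_m$ gives $a^{\omega_m}=(a^{\omega_{p^e}})^{\omega_m/\omega_{p^e}}\in (k^\times)^{p^e}$ for every $p^e\,\|\,m$, and an element that is a $p^e$-th power for all such prime powers is an $m$-th power by a Bezout argument. Incidentally, the paragraph about $N=\operatorname{Gal}(L/k(\zeta_m))$ (showing $|N|\mid\omega_m$ and $c(N)\subseteq(m/\omega_m)\mathbb{Z}/m\mathbb{Z}$) is correct but not actually used in your final step and could be dropped.
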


\begin{lem}\label{lem 3.2}

(I). Let $a \in \mathbb{Q}_p$ with $p \nmid a$, \(p\) is an odd prime number, and n is a positive integer. Then
$$
a \in \mathbb{Q}_{p}(\zeta_{p^n})^{p^n} \Leftrightarrow a \in \mathbb{Q}_p^{p^n} \Leftrightarrow p^{n+1} \mid\left(a^{p-1}-1\right) .
$$

(II). If $n=2^{k}$, $k\in \mathbb{N}^{*}$, and $c$ is an odd number, then 
$$ c\in \mathbb{Q}_{2}(\zeta_{n})^{n} \text{ if } c \text{ is } n \text{-power in } \mathbb{Z} /2^{2k+1} \mathbb{Z}.$$
\end{lem}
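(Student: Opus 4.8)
The plan is to establish, in (I), the two equivalences $a\in\mathbb{Q}_p^{p^n}\iff p^{n+1}\mid a^{p-1}-1$ and $a\in\mathbb{Q}_p^{p^n}\iff a\in\mathbb{Q}_p(\zeta_{p^n})^{p^n}$, and in (II) to reduce everything to Hensel's lemma. First I would dispose of the parts that only involve $\mathbb{Q}_p$: since $p$ is odd, the Teichmüller splitting gives $\mathbb{Z}_p^\times=\mu_{p-1}\times(1+p\mathbb{Z}_p)$, and a routine computation (say with the $p$-adic logarithm) gives $(1+p\mathbb{Z}_p)^{p^n}=1+p^{n+1}\mathbb{Z}_p$. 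Writing a unit as $a=\omega u$ with $\omega\in\mu_{p-1}$ and $u\in1+p\mathbb{Z}_p$, the relation $\gcd(p^n,p-1)=1$ makes every $\omega$ a $p^n$-th power, so $a\in\mathbb{Q}_p^{p^n}$ iff $u\in1+p^{n+1}\mathbb{Z}_p$; and because $a^{p-1}=u^{p-1}$ with $x\mapsto x^{p-1}$ a filtration-preserving automorphism of $1+p\mathbb{Z}_p$ ($p-1$ being a unit), this is equivalent to $p^{n+1}\mid a^{p-1}-1$. The implication $a\in\mathbb{Q}_p^{p^n}\Rightarrow a\in\mathbb{Q}_p(\zeta_{p^n})^{p^n}$ is trivial, so the whole of (I) comes down to the reverse inclusion.

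For that reverse inclusion, set $L=\mathbb{Q}_p(\zeta_{p^n})$, fix $\alpha\in L$ with $\alpha^{p^n}=a$, and suppose for contradiction that $a\notin\mathbb{Q}_p^{p^n}$. I would choose $n_0<n$ maximal with $a\in\mathbb{Q}_p^{p^{n_0}}$ and write $a=c^{p^{n_0}}$ with $c\in\mathbb{Z}_p^\times$; by maximality $c\notin\mathbb{Q}_p^{p}$. Comparing the $p^{n_0}$-th powers of $\alpha^{p^{n-n_0}}$ and $c$ gives $\alpha^{p^{n-n_0}}=\zeta c$ for some $\zeta\in\mu_{p^{n_0}}\subseteq\mu_{p^n}\subseteq L$; since $\eta\mapsto\eta^{p^{n-n_0}}$ maps $\mu_{p^n}$ onto $\mu_{p^{n_0}}$, we may absorb $\zeta$ into $\alpha$ and obtain $\delta\in L$ with $\delta^{p^{n-n_0}}=c$. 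As $c\notin\mathbb{Q}_p^{p}$ and $p$ is odd, the binomial irreducibility criterion makes $x^{p^{n-n_0}}-c$ irreducible over $\mathbb{Q}_p$, so $[\mathbb{Q}_p(\delta):\mathbb{Q}_p]=p^{n-n_0}$. Now $\operatorname{Gal}(L/\mathbb{Q}_p)\cong\mathbb{Z}/(p-1)\times\mathbb{Z}/p^{n-1}$, and the factor $\Delta_0\cong\mathbb{Z}/(p-1)$ lies inside every subgroup of $p$-power index, so every subfield of $L$ of $p$-power degree over $\mathbb{Q}_p$ is contained in $L_0:=L^{\Delta_0}$, with $[L_0:\mathbb{Q}_p]=p^{n-1}$. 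If $n_0=0$ this already contradicts $[\mathbb{Q}_p(\delta):\mathbb{Q}_p]=p^{n}>p^{n-1}$. If $n_0\ge1$, then $\mathbb{Q}_p(\delta)/\mathbb{Q}_p$, being a subextension of the abelian extension $L_0/\mathbb{Q}_p$, is Galois, hence contains every root of $x^{p^{n-n_0}}-c$ and therefore $\zeta_{p^{n-n_0}}$, so $\zeta_p\in L_0$; but $\gcd(p-1,p^{n-1})=1$ forces $\zeta_p\in\mathbb{Q}_p$, which is false. This contradiction proves (I).

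For (II) it suffices to prove the stronger assertion $c\in\mathbb{Q}_2^{2^k}$, because $\mathbb{Q}_2^{2^k}\subseteq\mathbb{Q}_2(\zeta_{2^k})^{2^k}$ trivially. I would take an odd integer $d$ with $d^{2^k}\equiv c\pmod{2^{2k+1}}$ and apply Hensel's lemma to $f(x)=x^{2^k}-c$ at $x=d$: one has $v_2(f(d))\ge 2k+1$ and $v_2(f'(d))=v_2\bigl(2^k d^{2^k-1}\bigr)=k$, so $v_2(f(d))>2\,v_2(f'(d))$, and $f$ has a root $e\in\mathbb{Z}_2$, i.e.\ $c=e^{2^k}\in\mathbb{Q}_2^{2^k}$.

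The main obstacle is the reverse inclusion in (I): one must show that adjoining $\zeta_{p^n}$ creates no new $p^n$-th powers among $p$-adic units. The delicate feature is that $L$ \emph{does} contain $\zeta_p$, so one cannot simply invoke $\zeta_p\notin\mathbb{Q}_p$; instead the argument must push the obstructing radical extension $\mathbb{Q}_p(\sqrt[p^{\,m}]{c})$ (with $c\notin\mathbb{Q}_p^{p}$, $m\ge1$) into the $p$-part $L_0$ of the abelian tower $L/\mathbb{Q}_p$—where $\zeta_p$ cannot live—and exploit that such a radical extension is non-normal over $\mathbb{Q}_p$. The two inputs to verify carefully there are the binomial irreducibility criterion and the claim that every $p$-power-degree subfield of $L$ sits inside $L_0$.
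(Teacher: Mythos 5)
Your proof is correct, but it takes a genuinely different route from the paper's. For the hard implication $a\in\mathbb{Q}_p(\zeta_{p^n})^{p^n}\Rightarrow a\in\mathbb{Q}_p^{p^n}$, the paper argues in one line from Schinzel's theorem on abelian radical extensions (its Theorem \ref{thm 3.1}, already needed elsewhere): if $a$ were a $p^n$-th power in $\mathbb{Q}_p(\zeta_{p^n})$ then $\mathbb{Q}_p(\zeta_{p^n},\sqrt[p^n]{a})/\mathbb{Q}_p$ is abelian, and since $\omega_{p^n}=1$ in $\mathbb{Q}_p$ (for odd $p$) Schinzel forces $a=r^{p^n}$ with $r\in\mathbb{Q}_p$; and it simply cites Viviani (Theorem 5.4) for the equivalence with $p^{n+1}\mid a^{p-1}-1$. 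You instead give a self-contained argument on both counts: the congruence criterion via the Teichm\"uller decomposition and the identity $(1+p\mathbb{Z}_p)^{p^n}=1+p^{n+1}\mathbb{Z}_p$ (your filtration argument with $x\mapsto x^{p-1}$ is fine since $p-1\in\mathbb{Z}_p^\times$), and the descent of $p^n$-th powers by a contradiction argument combining the binomial irreducibility criterion for $x^{p^{m}}-c$ with $c\notin\mathbb{Q}_p^p$ and the structure $\operatorname{Gal}(\mathbb{Q}_p(\zeta_{p^n})/\mathbb{Q}_p)\cong\mathbb{Z}/(p-1)\times\mathbb{Z}/p^{n-1}$, pushing the radical subfield into the degree-$p^{n-1}$ subfield $L_0$ and using that a subextension of an abelian extension is Galois, hence would contain $\zeta_p$ — in effect you reprove the special case of Schinzel's theorem that the paper quotes. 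Your subgroup claim (every subgroup of $p$-power index contains the prime-to-$p$ factor) and the absorption of the root of unity into $\alpha$ both check out, and your treatment of the edge cases $n_0=0$ and $n_0\ge 1$ is sound. The paper's route buys brevity and reuse of an already-cited classical theorem; yours buys an elementary, reference-free proof that makes explicit why adjoining $\zeta_{p^n}$ creates no new $p^n$-th powers among units. Part (II) is essentially identical in both: the same Hensel's lemma estimate $v_2(f(d))\ge 2k+1>2k=2v_2(f'(d))$ (the paper's shift $\beta=\alpha+2^{2k+1}$ is immaterial), concluding $c\in\mathbb{Q}_2^{2^k}\subseteq\mathbb{Q}_2(\zeta_{2^k})^{2^k}$.
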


\begin{proof}
    (I). The sufficiency is clear for the first equivalence conditions. Assume $a\in \mathbb{Q}_{p}(\zeta_{p^n})^{p^n}$, we have $\mathbb{Q}_{p}(\zeta_{p^n},\sqrt[p^n]{a})=\mathbb{Q}_{p}(\zeta_{p^n})$. Hence, the group $\operatorname{Gal}(\mathbb{Q}_{p}(\zeta_{p^n},\sqrt[p^n]{q}) /\mathbb{Q}_{p})$ is an abelian group. By Theorem \ref{thm 3.1}, we have 
$$ a^{\omega_{n}} = r^{n} \text{ for some } r\in \mathbb{Q}_{p}.$$
Since the field $\mathbb{Q}_{p}$ lacks $p^n$-th roots of unity beyond the identity, it follows that $\omega_{p^n}=1$. This establishes the necessity. The second equivalence is detailed in Theorem 5.4 of \cite{viviani2004ramification}.

    (II). Let $\alpha\in \mathbb{Z}$ be an odd number such that
$ \alpha^{n} \equiv c \bmod 2^{2k+1}$.
Let $\beta =\alpha+2^{2k+1}$ and $f(x) =x^{n}-c$. It is easy to check 
\( |f(\beta)|_{2}< |f'(\beta)|_{2}^{2}.\)
By Hensel's lemma, there exists a root of $f(x)$ in $\mathbb{Q}_{2}$.
\end{proof}

\begin{lem}\label{lem 3.3}
Let \(m,n, a\) be three integers and \(p\) be a prime number. Then \\
(I). The prime ideals above \(p\) of \(\mathbb{Q}(\zeta_n)\) is unramified in \(\mathbb{Q}(\zeta_n,\sqrt[m]{a})\) when \( p\nmid ma\);\\
(II). The prime ideals above \(p\) of \(\mathbb{Q}(\zeta_n)\) are totally ramified in \(\mathbb{Q}(\zeta_n,\sqrt[n]{p})\) when \( p \nmid n\).
\end{lem}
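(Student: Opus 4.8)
The plan is to reduce both statements to a local computation at a prime $\mathfrak{p}$ of $K:=\mathbb{Q}(\zeta_n)$ lying above $p$, analyzing the polynomial $x^m-a$ (resp.\ $x^n-p$) over the completion $K_\mathfrak{p}$. Since $m\mid n$ the base field contains $\mu_m$, so part (I) is a genuine Kummer situation in which the ramification of $\mathfrak{p}$ in $K(\sqrt[m]{a})$ is governed by $v_\mathfrak{p}$ of the radicand together with the residue characteristic --- exactly the kind of statement recorded in \cite{viviani2004ramification} --- but it is short enough to argue by hand.

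For part (I): first I would note that, because $\mathfrak{p}\mid p$ and $p\nmid a$, one has $v_\mathfrak{p}(a)=e(\mathfrak{p}/p)\,v_p(a)=0$, so $a$ is a unit in $\mathcal{O}_{K_\mathfrak{p}}$ and any root $\alpha$ of $x^m-a$ is a unit in the ring of integers of $K_\mathfrak{p}(\alpha)$. Let $g$ be the minimal polynomial of $\alpha$ over $K_\mathfrak{p}$; it is monic, divides $x^m-a$, and by Gauss's lemma over the DVR $\mathcal{O}_{K_\mathfrak{p}}$ it already lies in $\mathcal{O}_{K_\mathfrak{p}}[x]$ with $g\mid (x^m-a)$ there. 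Since $p\nmid m$, reduction modulo $\mathfrak{p}$ makes $x^m-\bar a$ separable (a multiple root would have to be $0$, which is not a root as $\bar a\neq 0$), hence $\bar g$ is separable, $\operatorname{disc}(g)$ is a $\mathfrak{p}$-adic unit, $\mathcal{O}_{K_\mathfrak{p}}[\alpha]$ is the full ring of integers, and $K_\mathfrak{p}(\alpha)/K_\mathfrak{p}$ is unramified; running over all $\mathfrak{P}\mid\mathfrak{p}$ of $\mathbb{Q}(\zeta_n,\sqrt[m]{a})$ finishes (I). Alternatively one can simply note that $\operatorname{disc}(x^m-a)=\pm m^m a^{m-1}$ is coprime to $p$, so $\mathcal{O}_K[\sqrt[m]{a}]$ is already maximal at $\mathfrak{p}$.

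For part (II): since $p\nmid n$, the prime $p$ is unramified in $\mathbb{Q}(\zeta_n)/\mathbb{Q}$, so $p$ itself is a uniformizer of $K_\mathfrak{p}$; hence $x^n-p$ is Eisenstein over $K_\mathfrak{p}$ (constant term of valuation $1$, all intermediate coefficients zero). Therefore $x^n-p$ is irreducible over $K_\mathfrak{p}$ and $K_\mathfrak{p}(\sqrt[n]{p})/K_\mathfrak{p}$ is totally ramified of degree $n$ (tamely, as $p\nmid n$). Because $(\sqrt[n]{p})^n=p\in K$ forces $[\mathbb{Q}(\zeta_n,\sqrt[n]{p}):K]\le n$ while the local degree at $\mathfrak{p}$ is already $n$, the global degree equals $n$, there is a unique $\mathfrak{P}$ above $\mathfrak{p}$, and $e(\mathfrak{P}/\mathfrak{p})=n$: $\mathfrak{p}$ is totally ramified.

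The only genuinely delicate point is the bookkeeping in part (I): $x^m-a$ need not be irreducible over $K$ (for instance if $a$ is already an $\ell$-th power for some $\ell\mid m$), so one cannot blindly treat it as the defining polynomial of $\mathcal{O}_L$. Passing to the actual minimal polynomial over the completion and invoking Gauss's lemma over $\mathcal{O}_{K_\mathfrak{p}}$, as above, removes the difficulty; citing the Kummer ramification formula of \cite{viviani2004ramification} is an alternative. Part (II) has no analogous issue, since the Eisenstein property forces irreducibility immediately.
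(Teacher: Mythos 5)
Your proof is correct, and it is worth separating the two parts when comparing it to the paper. For part (I) you land in the same place as the paper: its one-line argument is that the relative discriminant of $\mathbb{Q}(\zeta_n,\sqrt[m]{a})/\mathbb{Q}(\zeta_n)$ divides $\operatorname{disc}(x^m-a)=\pm m^m a^{m-1}$, which is prime to $p$ --- exactly your ``alternative'' remark --- while your main argument is a more careful local version that passes to the minimal polynomial of $\sqrt[m]{a}$ over $K_{\mathfrak{p}}$ and checks separability of its reduction. That extra care is a genuine improvement: the paper's phrasing (``the norm of $x^m-a$'') glosses over precisely the reducibility issue you flag, and your observation that the discriminant of a monic divisor divides the discriminant of $x^m-a$ is what makes the short argument airtight. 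For part (II) you genuinely diverge: the paper simply cites Theorem 4.3 of \cite{viviani2004ramification}, whereas you give a self-contained Eisenstein argument (since $p\nmid n$, $p$ is unramified in $\mathbb{Q}(\zeta_n)$, so $x^n-p$ is Eisenstein at every $\mathfrak{p}\mid p$, hence irreducible over $K_{\mathfrak{p}}$ and totally ramified; the local degree $n$ then forces $[\mathbb{Q}(\zeta_n,\sqrt[n]{p}):\mathbb{Q}(\zeta_n)]=n$ and uniqueness of the prime above $\mathfrak{p}$). Your route buys independence from the reference at essentially no extra cost; the citation buys brevity and finer ramification information that the lemma does not actually need.
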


\begin{proof}
$(I).$ Observe that the relative discriminant of \(\mathbb{Q}(\zeta_n,\sqrt[m]{a})/\mathbb{Q}(\zeta_n)\) divides the norm of \(x^m-a\), which equals \(m^ma^{m-1}\). Therefore, if \((p,ma)=1\), then \(p\) is not ramified.

$(II).$ This can be inferred from Theorem 4.3 in \cite{viviani2004ramification}.
\end{proof}

Suppose $n = 2^{n_0}p_1^{n_1}\cdots p_l^{n_l} \in \mathbb{N}^{+}$, where $p_i$ for $1 \le i \le l$ are distinct odd primes, $n_i \in \mathbb{N}$ for $0 \le i \le l$, and $l \in \mathbb{N}^{+}$. Let $m$ be an integer such that $m \mid n$. Define $F = \mathbb{Q}(\zeta_n)$, where $\zeta_n$ is the $n$-th primitive root of unity, and let $s$ be a rational prime not dividing $n$. We denote by $H$ the Hilbert class field of $F(\sqrt[m]{s})$.

\begin{pro}\label{prop 3.4}
    Let $s$ be an odd prime number. If $t$ is a rational prime that splits completely in $H\left(\zeta_{n \cdot P(n)\cdot 2^{2n_0}}\right)$ where \(P(n)\) is the product of distinct prime factors of \(n\), then the field $F(\sqrt[m]{st})$, denoted by $K$, satisfies $F(\sqrt[m]{s}, \sqrt[m]{t}) / K$ being unramified everywhere.
\end{pro}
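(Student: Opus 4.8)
The plan is to check place by place that $L:=F(\sqrt[m]{s},\sqrt[m]{t})$ is unramified over $K$, using from the hypothesis on $t$ only the two consequences: (a) $t\equiv 1\pmod{nP(n)2^{2n_0}}$, because $t$ splits completely in $\mathbb{Q}(\zeta_{nP(n)2^{2n_0}})\subseteq H(\zeta_{nP(n)2^{2n_0}})$; and (b) $t\ne s$, because $t$ splits completely in $F(\sqrt[m]{s})\subseteq H$ while $s$ is totally ramified in $F(\sqrt[m]{s})/F$ by Lemma \ref{lem 3.3}(II). First come soft reductions. As $l\ge 1$, we have $n\ge 3$, so $F=\mathbb{Q}(\zeta_n)$ is totally imaginary, hence so are $K$ and $L$, and no archimedean place ramifies. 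From (a), $t\equiv 1\pmod{m}$, so $\gcd(m,t)=1$; also $\gcd(m,s)=1$ since $s\nmid n$ and $m\mid n$; hence, together with (b), the primes of $F$ above $s$, above $t$, and above $m$ form three disjoint families. Finally, for a prime $\mathfrak p$ of $F$ above a rational prime not dividing $mst$, Lemma \ref{lem 3.3}(I) makes $F(\sqrt[m]{s})/F$ and $F(\sqrt[m]{t})/F$ unramified at $\mathfrak p$, hence $L/F$ and a fortiori $L/K$ are unramified there. It remains to treat the primes above $s$, above $t$, and above $m$.

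Tame primes. Let $\mathfrak s$ be a prime of $F$ above $s$. Since $s$ is unramified in $F$ and $s\ne t$, the valuation $v_{\mathfrak s}(st)=1$, so $K/F$ is totally and tamely ramified of degree $m$ above $\mathfrak s$; if $\mathfrak S$ is the prime of $K$ above $\mathfrak s$, then $v_{\mathfrak S}(s)=m$, say $s=u\pi^m$ with $u$ a unit and $\pi$ a uniformizer of $K_{\mathfrak S}$. Since $\sqrt[m]{st}\in K_{\mathfrak S}$ and $\mu_m\subseteq F\subseteq K$, the completion of $L$ above $\mathfrak S$ equals $K_{\mathfrak S}(\sqrt[m]{s})=K_{\mathfrak S}(\sqrt[m]{u})$, which is unramified over $K_{\mathfrak S}$ because $u$ is a unit and $s\nmid m$. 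The primes of $F$ above $t$ are handled the same way: there $s$ is already a unit and $t\nmid m$, and the completion of $L$ over such a prime of $K$ is obtained by adjoining $\sqrt[m]{s}$, again an $m$-th root of a unit in residue characteristic prime to $m$, hence unramified.

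The primes above $m$ are the crucial case. Let $\mathfrak q$ be a prime of $F$ above a rational prime $p\mid m$, so $p\in\{2,p_1,\dots,p_l\}$ and $s,t$ are units at $\mathfrak q$. I will show $t\in F_{\mathfrak q}^{\times m}$; granting this and writing $t=r^m$ with $r\in F_{\mathfrak q}$, for every prime $\mathfrak Q$ of $K$ above $\mathfrak q$ one has $K_{\mathfrak Q}=F_{\mathfrak q}(\sqrt[m]{st})=F_{\mathfrak q}(\sqrt[m]{s})$ (as $\sqrt[m]{st}=r\sqrt[m]{s}$ up to a root of unity in $F_{\mathfrak q}$), and likewise the completion of $L$ above $\mathfrak Q$ equals $F_{\mathfrak q}(\sqrt[m]{s},\sqrt[m]{t})=F_{\mathfrak q}(\sqrt[m]{s})=K_{\mathfrak Q}$, so $L/K$ is trivial, in particular unramified, at $\mathfrak Q$. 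To prove $t\in F_{\mathfrak q}^{\times m}$, write $m=p^{a}m'$ with $p\nmid m'$. Since $p\mid P(n)$, fact (a) gives $t\equiv 1\pmod{p}$, so $t$ reduces to $1$ in the residue field of $\mathfrak q$ and Hensel's lemma gives $t\in F_{\mathfrak q}^{\times m'}$. For the $p$-part: if $p=p_i$ is odd, then $p_i^{n_i}\mid n$ and $p_i\mid P(n)$ give $p_i^{n_i+1}\mid t-1$, and since $a\le n_i$ this forces $p_i^{a+1}\mid t^{p_i-1}-1$, so Lemma \ref{lem 3.2}(I) gives $t\in\mathbb{Q}_{p_i}(\zeta_{p_i^{a}})^{p_i^{a}}\subseteq F_{\mathfrak q}^{\times p_i^{a}}$; if $p=2$, then $n_0\ge 1$ and $2^{n_0}\mid n$, $2\mid P(n)$, and the extra factor $2^{2n_0}$ together give $2^{3n_0+1}\mid t-1$, so (as $a\le n_0$, whence $2a+1\le 3n_0+1$) $t\equiv 1\pmod{2^{2a+1}}$, i.e. $t$ is a $2^{a}$-th power modulo $2^{2a+1}$, and Lemma \ref{lem 3.2}(II) gives $t\in\mathbb{Q}_2(\zeta_{2^{a}})^{2^{a}}\subseteq F_{\mathfrak q}^{\times 2^{a}}$. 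Multiplying the $p$-part and prime-to-$p$ statements yields $t\in F_{\mathfrak q}^{\times m}$. This disposes of all places of $K$.

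The main obstacle is this last step, and more precisely the arithmetic check that the auxiliary conductor $n\cdot P(n)\cdot 2^{2n_0}$ in the hypothesis is exactly big enough: for an odd $p_i\mid m$ the factor $P(n)$ is what turns $p_i^{n_i}\mid n$ into the required $p_i^{v_{p_i}(m)+1}\mid t-1$, and for $p=2$ the factor $2^{2n_0}$ (together with $2^{n_0}\mid n$ and $2\mid P(n)$) is what secures $t\equiv 1\pmod{2^{2v_2(m)+1}}$, which is exactly the input Lemma \ref{lem 3.2}(II) requires. Everything else is routine: the totally-imaginary observation at the infinite places, the reduction to the primes above $mst$, and the standard fact that adjoining an $m$-th root of a unit to a local field containing $\mu_m$ with residue characteristic prime to $m$ yields an unramified extension.
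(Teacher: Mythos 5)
Your proof is correct and takes essentially the same route as the paper: Lemma \ref{lem 3.3}(I) handles the primes away from $mst$ as well as those above $s$ and $t$, and the splitting hypothesis is converted into the congruences $t\equiv 1\pmod{p_i^{n_i+1}}$ and $t\equiv 1\pmod{2^{2n_0+1}}$ so that Lemma \ref{lem 3.2} makes $t$ a local $p^{v_p(m)}$-th power at the primes dividing $m$, which is exactly the paper's mechanism. The only difference is presentational: you assemble the wild case inside the completions (showing $L$ and $K$ have the same completion above each $p\mid m$), whereas the paper reaches the same conclusion through a global field diagram and restriction of ramification.
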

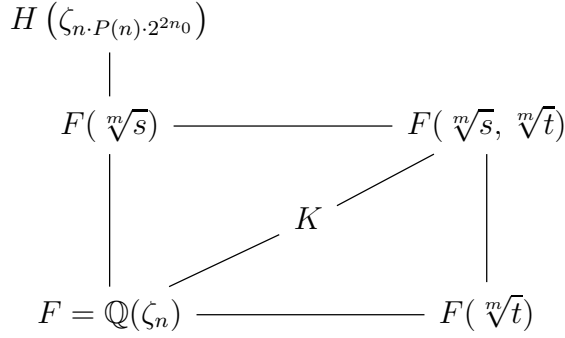
\begin{figure}[htbp]
\begin{tikzcd}[sep=scriptsize]
	{H\left(\zeta_{n \cdot P(n)\cdot 2^{2n_0}}\right)} \\
	{F(\sqrt[m]{s})} && {F(\sqrt[m]{s}, \sqrt[m]{t})} \\
	& K \\
	{F=\mathbb{Q}(\zeta_n)} && {F( \sqrt[m]{t})}
	\arrow[no head, from=1-1, to=2-1]
	\arrow[no head, from=2-1, to=2-3]
	\arrow[no head, from=2-1, to=4-1]
	\arrow[no head, from=2-3, to=4-3]
	\arrow[no head, from=3-2, to=2-3]
	\arrow[no head, from=4-1, to=3-2]
	\arrow[no head, from=4-1, to=4-3]
\end{tikzcd}
\captionsetup{skip=2pt}
\caption{Field Diagram for Proposition \ref{prop 3.4}}
\end{figure}

\begin{proof}
By lemma \ref{lem 3.3}, The prime ideals of $K$ ramified in $F(\sqrt[m]{s},\sqrt[m]{t})$ that only be prime ideals above $2,s,t$, and $p_{i}, 1 \le i \le l$.

We first check $s$ and $t$. By lemma \ref{lem 3.3} I, the prime ideals above $s$ in $F$ is unramified in $F(\sqrt[m]{t})$. Hence, prime ideals above $s$ in $K$ are unramified in $F(\sqrt[m]{s}, \sqrt[m]{t})$. Similarly, It is easy to show the prime ideal above $t$ of $K$ is unramified in $F(\sqrt[m]{s}, \sqrt[m]{t})$. 

Without loss of generality, assume \(n\) has an odd prime factor; otherwise, we just need to examine the ramification case of 2.
We set $n^{\prime}=2^{n_0}p_2^{n_2} \cdots \cdots p_l^{n_l}$ (resp. $m^{\prime}=2^{m_0}p_{2}^{m_{1}}\cdots p_{l}^{m_{l}}$), then $n=p_1^{n_1} \cdot n^{\prime}$ (resp. $m=p_1^{m_1} \cdot m^{\prime}$) and  $\left(n^{\prime}, p_1\right)=1$ (respect $\left(m^{\prime}, p_1\right)=1$). By the choice of $t$, we know $t$ is split completely in $\mathbb{Q}(\zeta_{p_{1}^{n_{1}+1}})$, which implies 
$t \equiv 1 \bmod p_1^{n_1+1}. $
By lemma \ref{lem 3.2}, $t \in \mathbb{Q}_{p_1}^{p_1^{n_1}}$. In other words, $t$ is $p_1^{n_1}$-power in $\mathbb{Q}_{p_1}\left(\zeta_{p_1^{n_1}}\right)$, which imply  
$$ 
t\in \mathbb{Q}_{p_1}\left(\zeta_{p_1^{n_1}}\right)^{p_1^{m_1}} \text{ and   } \quad \mathbb{Q}_{p_{1}}\left(\zeta_{p_{1}^{n_{1}}},\sqrt[p_{1}^{m_{1}}]{t} \right) = \mathbb{Q}_{p_1}\left(\zeta_{p_1 ^{n_1}}\right).$$

Hence, $(1-\zeta_{p_{1}^{n_{1}}})$ is split completely in $\mathbb{Q}(\zeta_{p_{1}^{n_{1}}},\sqrt[p_{1}^{m_{1}}]{t})$. From the assumption, We have the following field diagram:

\[\begin{tikzcd}[sep=scriptsize]
	{\mathbb{Q}\left(\zeta_{p_1^{n_1}}, \sqrt[p_1^{m_1}]{t}\right)} & {\mathbb{Q}\left(\zeta_{n}, \sqrt[p_1^{m_1}]{t}\right)} & {\mathbb{Q}\left(\zeta_{n}, \sqrt[m]{t}\right)} \\
	{\mathbb{Q}\left(\zeta_{p_1^{n_1}}\right)} & {\mathbb{Q}\left(\zeta_{n}\right)} & {\mathbb{Q}\left(\zeta_{n}, \sqrt[m']{t}\right)} \\
	{\mathbb{Q}} & {\mathbb{Q}\left(\zeta_{n'}\right)} & {\mathbb{Q}\left(\zeta_{n'}, \sqrt[m']{t}\right)}
	\arrow[no head, from=1-1, to=1-2]
	\arrow[no head, from=1-1, to=2-1]
	\arrow[no head, from=1-2, to=1-3]
	\arrow[no head, from=1-2, to=2-2]
	\arrow[no head, from=1-3, to=2-3]
	\arrow[no head, from=2-1, to=2-2]
	\arrow[no head, from=2-1, to=3-1]
	\arrow[no head, from=2-2, to=2-3]
	\arrow[no head, from=2-2, to=3-2]
	\arrow[no head, from=2-3, to=3-3]
	\arrow[no head, from=3-1, to=3-2]
	\arrow[no head, from=3-2, to=3-3]
\end{tikzcd}\]

Notice the prime ideals above $p_{1}$ of $\mathbb{Q}(\zeta_{n})$ which are unramified in $\mathbb{Q}(\zeta_{n}, \sqrt[p_1^{m_1}]{t})$. Furthermore, as $p_1$ is coprime to $n^{\prime},m^{\prime}$ and \(t\), the prime ideals over $p_{1}$ in $\mathbb{Q}(\zeta_{n^{\prime}})$ are unramified in $\mathbb{Q}(\zeta_{n^{\prime}}, \sqrt[m^{\prime}]{t})$. From the field diagram, it follows that the prime ideals over $p_1$ in $\mathbb{Q}(\zeta_{n})$ are unramified in $\mathbb{Q}(\zeta_{n}, \sqrt[m]{t})$. In the same way, we deduce that the prime ideals over $p_i$ in $F$, where $p_i$ run through all the odd prime factors of $n$, are unramified in $F(\sqrt[m]{t})$. So, the prime ideals above $p_{i}$ in $K$  are unramified in $F(\sqrt[m]{s},\sqrt[m]{t})$.

On the other hands, let $n^{\prime \prime}=n /2^{n_0}$ and $m^{\prime \prime}=m/2^{m_0}$. By the choice of $t$, $ t$ is split completely in $\mathbb{Q}(\zeta_{2^{2n_0+1}})$ which mean that $t \equiv 1 \bmod 2^{2n_0+1}$. By the lemma \ref{lem 3.2}, $t\in \mathbb{Q}_{2}(\zeta_{2^{n_0}})^{2^{n_0}}$. Applying the same reasoning as previously, the prime ideals of $K$ lying over 2 remain unramified in $F(\sqrt[m]{s},\sqrt[m]{t})$.
\end{proof}

 In fact, we do not always require $t$ to satisfy such a strong splitting condition. $H$ can be substituted with any subfield of the Hilbert class field of $F(\sqrt[m]{s})$. Moreover, if $m$ is odd, the splitting condition for $t$ can be relaxed to complete splitting in $H\left(\zeta_{n \cdot P(n)}\right)$. Specifically, when $n=m=4$, we can find additional unramified construction. 

\begin{pro}
    Let $p$ and $q$ be coprime rational prime numbers, which are odd. $H$ is the Hilbert class field of $\mathbb{Q}(\zeta_{4},\sqrt[4]{p})$. If $q$ is split completely in $H(\zeta_{8})$, then 
$$ \mathbb{Q}(\zeta_{4},\sqrt[4]{p},\sqrt[4]{q}) / \mathbb{Q}(\sqrt[4]{pq},\zeta_{4})$$
is unramified everywhere. 
\end{pro}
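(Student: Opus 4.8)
\emph{Proof plan.} I would set $K=\mathbb Q(\zeta_4,\sqrt[4]{pq})$ and $L=\mathbb Q(\zeta_4,\sqrt[4]{p},\sqrt[4]{q})$, and check that $L/K$ is unramified at every place. Choosing the radicals so that $\sqrt[4]{pq}=\sqrt[4]{p}\cdot\sqrt[4]{q}$, and using $\zeta_4\in K$, one has $L=K(\sqrt[4]{p})=K(\sqrt[4]{q})$, so $L/K$ is abelian. Since $\mathbb Q(\zeta_4)$ is imaginary quadratic, $K$ and $L$ have no real places, so $L/K$ is automatically unramified at the archimedean places; and a discriminant computation (the discriminant of $x^4-a$ being $\pm 256\,a^{3}$, as in Lemma~\ref{lem 3.3}(I)) shows that $L/K$ is unramified outside the primes of $K$ above $2$, $p$ and $q$. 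So it remains to treat those three sets of primes.

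For a prime $\mathfrak q$ of $K$ above $p$, I would use the presentation $L=K(\sqrt[4]{q})$. Since $p\nmid 4q$, Lemma~\ref{lem 3.3}(I) gives that the prime $\mathfrak p$ of $\mathbb Q(\zeta_4)$ below $\mathfrak q$ is unramified in $\mathbb Q(\zeta_4,\sqrt[4]{q})$; equivalently, a fourth root of $q$ generates an unramified extension of $\mathbb Q(\zeta_4)_{\mathfrak p}$. As $K_{\mathfrak q}\supseteq\mathbb Q(\zeta_4)_{\mathfrak p}$, the completion $L_{\mathfrak Q}$ (any $\mathfrak Q\mid\mathfrak q$) is then the compositum of $K_{\mathfrak q}$ with an unramified extension of $\mathbb Q(\zeta_4)_{\mathfrak p}$, hence unramified over $K_{\mathfrak q}$. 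The primes of $K$ above $q$ would be handled symmetrically, via $L=K(\sqrt[4]{p})$ and $q\nmid 4p$.

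The crux is the prime $2$. The field $\mathbb Q(\zeta_4)=\mathbb Q(i)$ has a unique prime above $2$, with completion $E=\mathbb Q_2(i)$. Since $q$ splits completely in $H(\zeta_8)$ it splits completely in $\mathbb Q(\zeta_8)\subseteq H(\zeta_8)$, hence $q\equiv 1\pmod 8$; as $(\mathbb Z_2^{\times})^2=1+8\mathbb Z_2$, I can write $q=r^2$ with $r\in\mathbb Z_2^{\times}$. Then every fourth root $\gamma$ of $q$ satisfies $\gamma^2=\pm r$, and since $-1=i^2$ is a square in $E$ this gives $E(\gamma)=E(\sqrt r)$. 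Now $E(\sqrt r)/E$ is unramified: $\mathbb Z_2^{\times}/(\mathbb Z_2^{\times})^2$ is represented by $1,-1,5,-5$, while $E(\sqrt{\pm 1})=E$ and $E(\sqrt{\pm 5})=E(\sqrt 5)=\mathbb Q_2(i,\sqrt 5)$ is the unramified quadratic extension of $E$. Hence $\gamma$ generates an unramified extension of $E$, so for a prime $\mathfrak q$ of $K$ above $2$ and $\mathfrak Q\mid\mathfrak q$ in $L$, $L_{\mathfrak Q}=K_{\mathfrak q}(\gamma)$ is again a compositum of $K_{\mathfrak q}$ with an unramified extension of $E$, hence unramified over $K_{\mathfrak q}$. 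Combining the three cases would give that $L/K$ is unramified everywhere.

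The step I expect to be the main obstacle — and the reason this $n=m=4$ case is singled out from Proposition~\ref{prop 3.4} — is exactly this analysis at $2$: one must see that $\mathbb Q_2(\zeta_4,\sqrt[4]{q})/\mathbb Q_2(\zeta_4)$ is already unramified under the weak congruence $q\equiv 1\pmod 8$, rather than under $q\equiv 1\pmod{32}$ as the general argument (with exponent $2^{2n_0}=2^{4}$) would require. This is what lets one replace $H(\zeta_{128})$ by $H(\zeta_8)$, and it relies on the special feature of $\mathbb Q_2(i)$ that the square root of any $2$-adic unit generates an unramified extension.
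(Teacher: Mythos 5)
Your argument is correct, and at the one genuinely delicate point, the prime $2$, it takes a different route from the paper's. The paper, after observing that splitting in $\mathbb{Q}(\zeta_8)$ forces $q\equiv 1\pmod 8$, invokes V\'elez's theorem on the factorization of $2$ in the pure quartic field $\mathbb{Q}(\sqrt[4]{q})$, rules out the case $2\mathcal{O}=P_0^2$ (because $2$ must split in $\mathbb{Q}(\sqrt q)$), deduces $2\mathcal{O}=P_0P_1^2$, and then pieces together the ramification index of $2$ in the degree-$8$ field $\mathbb{Q}(\zeta_4,\sqrt[4]{q})$ to conclude it equals $2$, matching that of $\mathbb{Q}(\zeta_4)$. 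You instead work entirely inside $E=\mathbb{Q}_2(i)$: since $q\equiv 1\pmod 8$ gives $q\in(\mathbb{Z}_2^\times)^2$, a fourth root of $q$ is a square root of some unit $r\in\mathbb{Z}_2^\times$ (up to the square $i^2=-1$), and $E(\sqrt r)/E$ is always unramified because $\mathbb{Z}_2^\times/(\mathbb{Z}_2^\times)^2$ is generated by $-1$ (a square in $E$) and $5$ (whose square root gives the unramified quadratic extension). Your version is shorter, avoids the external reference to V\'elez and the somewhat awkward case-checking, and makes transparent exactly why the congruence $q\equiv1\pmod 8$ (i.e.\ $\zeta_8$ rather than $\zeta_{32}$, as Proposition~\ref{prop 3.4} with $2^{2n_0}=16$ would demand) already suffices. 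The two proofs establish the same local statement at $2$; the remaining places (infinite, above $p$, above $q$) are handled by both in essentially the same way, with you spelling out what the paper dismisses as ``similarly to the proof of the previous proposition.''
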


\begin{proof}

Similarly to the proof of the previous proposition, we need to check the ramification index of 2 in $\mathbb{Q}(\sqrt[4]{q},\zeta_{4})$.
    By the choice of $q$, $q$ is split completely in $\mathbb{Q}(\zeta_{8})$, which imply $q \equiv 1 \bmod 8$. 
If $q\in \mathbb{Q}_{2}^{4}$, everything is done. 
If $q\notin \mathbb{Q}_{2}^{4}$, then 
$$ 2\mathcal{O}_{\mathbb{Q}(\sqrt[4]{q})}=P_{0}^{2^{1-t}} \cdot (P_{1}P_{2}^{2}\cdots P_{t}^{2^{t-1}})^{2^{2-t}} \text{ where } t=0 \text{ or } 1$$ by \cite{velez1988factorization}, Theorem 7. But $t=0$ is impossible. when $t=0$, we have 
$ 2\mathcal{O}_{\mathbb{Q}\sqrt[4]{q}}=P_0^2,$ which implies $\mathbb{Q}(\sqrt{q}) /\mathbb{Q}$ is not split completely at $2$. This is a contradiction with the choice of $q$. When $t=1$, $$ 2\mathcal{O}_{\mathbb{Q}(\sqrt[4]{q})}=P_{0}P_{1}^{2}.$$

Notice the ramification index of 2 in $\mathbb{Q}(\sqrt[4]{q},\zeta_{4})$ is even and $e(P_{0}|2)=1$, then $P_{0}$ is totally ramified in $\mathbb{Q}(\sqrt[4]{q},\zeta_{4})$. So the ramification index of 2 in $\mathbb{Q}(\sqrt[4]{q},\zeta_{4})$ is 2. Therefore, the prime ideals above 2 of $\mathbb{Q}(\zeta_{4})$ are unramified in $\mathbb{Q}(\sqrt[4]{q},\zeta_{4})$.  
\end{proof}

\subsection{The Proof of Theorem1.1}
Let $H$ denote the Hilbert class field of $F(\zeta_n)$. We choose \(q\) satisfying the split condition in Proposition \ref{prop 3.4} 

By the Chebotarev Density Theorem, there exist infinitely many prime $q$ such that $F(\sqrt[m]{p},\sqrt[m]{q})/F(\sqrt[m]{pq})$ is an unramified extension. Furthermore, we apply Proposition \ref{prop 2.4} to the extension \(L\) over \(H\), where $L = H(\sqrt[m]{q})$. Since the prime ideals above $q$ is totally ramified in the extension $F(\sqrt[m]{q})/F$ and unramified in the extension $H/F$.
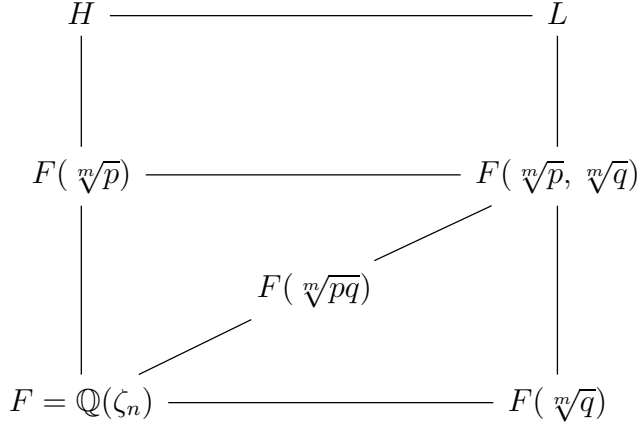
\begin{figure}[htbp]
\begin{tikzcd}
	H && L \\
	\\
	{F(\sqrt[m]{p})} && {F(\sqrt[m]{p},\sqrt[m]{q})} \\
	& {F(\sqrt[m]{pq})} \\
	{F=\mathbb{Q}(\zeta_n)} && {F(\sqrt[m]{q})}
	\arrow[no head, from=1-1, to=1-3]
	\arrow[no head, from=1-1, to=3-1]
	\arrow[no head, from=3-1, to=3-3]
	\arrow[no head, from=3-1, to=5-1]
	\arrow[no head, from=3-3, to=1-3]
	\arrow[no head, from=4-2, to=3-3]
	\arrow[no head, from=5-1, to=4-2]
	\arrow[no head, from=5-1, to=5-3]
	\arrow[no head, from=5-3, to=3-3]
\end{tikzcd}
\captionsetup{skip=2pt}
\caption{Field Diagram for Theorem \ref{thm 1.1}}
\end{figure}
Then 
\[
    \sum_{\substack{\mathfrak{p} \in R}} d_{p_i}I_{\mathfrak{P}}(K / k)\geq mh\varphi(n) d_{p_i}(\mathbb Z/m\mathbb Z)=mh\varphi(n) ,\text{where  } p_i\mid m.
\]
    By the Dirichlet's unit theorem, 
$$L_{p_i}\leq \frac{\varphi(n)mh}{2}+2\sqrt{\frac{\varphi(n)m^2h}{2}+1}.$$
As $r_1^{ram}(k)=0$, it suffices to show $$mh\varphi(n)\geq 3+\frac{\varphi(n)mh}{2}+2\sqrt{\frac{\varphi(n)m^2h}{2}+1}.$$ This implies $$\frac{\varphi(n)mh}{2}\geq \frac{3}{m}+2\sqrt{\frac{\varphi(n)h}{2}+\frac{1}{m^2}}.$$ 

In conclusion, as $h \geq \frac{12}{\varphi(n)m} + \frac{8}{\varphi(n)}$, it is established that $L$ possesses an infinite $p_i$-class field tower. Given that $K/F(\sqrt[m]{pq})$ is an unramified extension, it follows that $\mathbb{Q}(\zeta_n,\sqrt[m]{pq})$ also exhibits an infinite class field tower.

\qed

\subsection{The application of Theorem 1.1}

Apply Theorem \ref{thm 1.1} for m=3,5 and 7 to find an infinite class field tower. We can obtain the following results according to \cite{sagemath}.

\begin{cor}
    The number field $\mathbb Q(\zeta_{9},\sqrt[3]{7\cdot 181})$ has infinite 3-class field tower with root discriminant $\approx 776.7$.

\begin{proof}
    Notice the class number of $K=\mathbb{Q}(\zeta_9, \sqrt[3]{7})$ is $3$, which is greater $\frac{12}{\varphi(9)\cdot3}+\frac{8}{\varphi(9)}=2$ and 181 prime ideals split completely into the principal ideal of $K$. By Lemma \ref{lem 3.2} (I), the prime ideals above 3 in \(\mathbb{Q}(\zeta_3)\) are unramified in the field \(\mathbb{Q}(\zeta_3, \sqrt[3]{181})\). Consequently, the prime ideals above 3 in \(\mathbb{Q}(\zeta_9)\) are unramified in \(\mathbb{Q}(\zeta_9, \sqrt[3]{181})\). This leads to the same unramified result asserted in Proposition \ref{prop 3.4}. Similarly, following the proof of the main theorem, the number field \(\mathbb{Q}(\zeta_9, \sqrt[3]{7 \cdot 181})\) possesses an infinite 3-class field tower.
\end{proof}

\begin{remark}
    In \cite{leshin2014infinite}, Leshin asserts that as the class number of $K=\mathbb{Q}(\zeta_3,\sqrt[3]{79})$ equals 12 and the prime 97 splits completely into principal ideals in $K$, the field $\mathbb{Q}(\zeta_3,\sqrt[3]{79\cdot 97})$ possesses an infinite 3-class field tower. 
    
    The statement may be incorrect. Suppose $L/K$ is an abelian unramified extension of degree $n$, with $p \mid n$ and $n$ not being a power of $p$. While $L$ possesses an infinite $p$-class field tower, it is not guaranteed that $K$ does. We find an counterexample in \cite{schoof1986infinite}, let $H$ be the hilbert class field of $\mathbb Q(\sqrt{-191}) $, $L=H(\sqrt{773})$ and $K=\mathbb Q(\sqrt{-191\cdot 773})$. $L/K$ is an abelian unramified extension with degree 26. According to Schoof \cite{schoof1986infinite}, the number field L possesses an infinite 2-class field tower. But, the class group of K is isomorphic to $\mathbb{Z}/56\mathbb{Z}$, implying that K has a finite 2-class field tower, as established by Furtwangler or see \cite{lemmermeyer2010class}, Proposition 1.2.10. 
\end{remark}
\end{cor}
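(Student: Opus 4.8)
Put $F=\mathbb Q(\zeta_9)$, $K=F(\sqrt[3]{7})$ and $N=F(\sqrt[3]{7\cdot 181})$, so that we are in the situation of Theorem~\ref{thm 1.1} with $n=9$, $m=3$, $p=7$, candidate prime $q=181$, and $\varphi(9)=6$. The idea is to re-run the proof of Theorem~\ref{thm 1.1} for this explicit pair $(p,q)$. A computer-algebra computation (\cite{sagemath}) gives $h_K=3$, and the threshold condition of Theorem~\ref{thm 1.1} is met:
\[
\frac{12}{\varphi(9)\cdot 3}+\frac{8}{\varphi(9)}=\frac23+\frac43=2\le 3=h_K.
\]
Since Theorem~\ref{thm 1.1} only yields infinitely many admissible $q$, one must verify by hand that $q=181$ is admissible; the required input, again from \cite{sagemath}, is that $181$ splits completely into \emph{principal} prime ideals of $K$, equivalently splits completely in the Hilbert class field $H$ of $K$.

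Next I would show that $K(\sqrt[3]{181})=\mathbb Q(\zeta_9,\sqrt[3]{7},\sqrt[3]{181})$ is everywhere unramified over $N$, the analogue of Proposition~\ref{prop 3.4} in this case. By Lemma~\ref{lem 3.3} the only primes of $N$ that could ramify lie over $3$, $7$ or $181$. At the primes over $7$ and $181$ one repeats the compositum argument of the proof of Proposition~\ref{prop 3.4}: the complete splitting of $181$ into principal ideals of $K$ forces $7$ to be a cube in $\mathbb Q_{181}$, so $K(\sqrt[3]{181})/N$ is locally trivial at $181$, while Lemma~\ref{lem 3.3}(I) disposes of the primes over $7$. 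The delicate point is the behaviour at $3$: the full hypothesis of Proposition~\ref{prop 3.4} fails, since $181\equiv 19\not\equiv 1\pmod{27}$, so instead I would invoke Lemma~\ref{lem 3.2}(I): from $181\equiv 1\pmod 9$ we get $9\mid 181^2-1$, hence $181\in\mathbb Q_3^{\,3}$, so $\sqrt[3]{181}\in\mathbb Q_3$ and the primes over $3$ of $F$ are unramified (in fact split) in $F(\sqrt[3]{181})$, hence in $\mathbb Q(\zeta_9,\sqrt[3]{181})$ and therefore in $K(\sqrt[3]{181})$. Since $\zeta_3\in N$ and $[K(\sqrt[3]{181}):N]=3$, this exhibits $K(\sqrt[3]{181})/N$ as an abelian unramified $3$-extension.

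Finally, set $L=H(\sqrt[3]{181})=H\cdot K(\sqrt[3]{181})$. Then $L/K(\sqrt[3]{181})$ is the base change of the unramified extension $H/K$ and so is unramified everywhere; combined with the previous paragraph this gives $L/N$ unramified everywhere, of degree $[H:K]\cdot 3=9=3^2$. In particular $\operatorname{Gal}(L/N)$ is an abelian $3$-group and $L$ lies in the $3$-Hilbert class field of $N$. Now apply Proposition~\ref{prop 2.4} to $L/H$: since $181$ splits completely in $H$ and $[H:\mathbb Q]=mh\varphi(9)=54$, there are $54$ primes of $H$ above $181$, each tamely and totally ramified in $L$ with cyclic inertia of order $3$, so $\sum_{\mathfrak p\in R} d_3\bigl(I_{\mathfrak P}(L/H)/D'_{\mathfrak P}(L/H)\bigr)\ge 54$; meanwhile $H$ is totally complex, so $r_1^{ram}(H)=0$, and Dirichlet's unit theorem gives $L_3\le 27+2\sqrt{82}$. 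Hence the criterion of Proposition~\ref{prop 2.4} reduces to $54\ge 1+2+L_3$, which is precisely the $n=9$, $m=3$, $h=3$ instance of the inequality displayed near the end of the proof of Theorem~\ref{thm 1.1} and therefore holds. Thus $L$ has an infinite $3$-class field tower; since $L/N$ is a finite unramified $3$-extension, $N=\mathbb Q(\zeta_9,\sqrt[3]{7\cdot 181})$ has an infinite $3$-class field tower as well (its $3$-class field tower contains that of $L$). A direct computation of $\operatorname{disc}(N)$ using \cite{sagemath} gives root discriminant $\approx 776.7$.

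I expect the main obstacle to be the second paragraph: re-establishing, from the weaker information available for the single prime $q=181$ (splitting in $H$, plus $181\equiv 1\pmod 9$ rather than $\equiv 1\pmod{27}$), that $K(\sqrt[3]{181})/N$ is unramified at every place --- in particular controlling the primes over $3$ through Lemma~\ref{lem 3.2}(I) in place of the cyclotomic splitting used in Proposition~\ref{prop 3.4}. A secondary subtlety, illustrated by the Remark below, is that descending infiniteness of the \emph{$3$-}class field tower from $L$ to $N$ is legitimate here precisely because $[L:N]=9$ is a power of $3$, not merely because $L/N$ is unramified.
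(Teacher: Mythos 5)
Your proposal is correct and follows essentially the same route as the paper: verify $h_{K}=3\ge 2$ for $K=\mathbb{Q}(\zeta_9,\sqrt[3]{7})$, use the complete splitting of $181$ into principal primes of $K$, handle the primes above $3$ via Lemma \ref{lem 3.2}(I) with $181\equiv 1 \pmod 9$ (since the mod-$27$ hypothesis of Proposition \ref{prop 3.4} fails), and then run the argument of Theorem \ref{thm 1.1} with $L=H(\sqrt[3]{181})$ over $H$. You additionally spell out the numerics of Proposition \ref{prop 2.4} and the final descent from $L$ to $\mathbb{Q}(\zeta_9,\sqrt[3]{7\cdot 181})$ through an unramified extension of $3$-power degree, a point the paper leaves implicit but which is exactly the subtlety flagged in its Remark.
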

\begin{cor}\label{cor 3.9}
The number field $\mathbb Q(\zeta_{40},\sqrt[5]{3\cdot 41})$ has infinite $5$-class field tower with root discriminant $\approx 1196.2$.
\begin{proof}
    Since $\frac{12}{\varphi(40)\cdot 5 }+\frac{8}{\varphi(40)}\leq 1$, there is no need to verify the class number of $K=\mathbb Q(\zeta_{40},\sqrt[5]{3})$. Observe that $41$ is completely split in $K$, and the prime ideal $(1-\zeta_5)$ in $\mathbb Q(\zeta_5)$ is totally ramified in both $\mathbb Q(\zeta_{5},\sqrt[5]{41})$ and $\mathbb Q(\zeta_{5},\sqrt[5]{3})$. 
    
    We assert that $\mathbb Q(\zeta_{5},\sqrt[5]{3},\sqrt[5]{41})/\mathbb Q(\zeta_{5},\sqrt[5]{3\cdot 41})$ is an unramified extension. Consider the number field $\mathbb Q(\zeta_{5},\sqrt[5]{3\cdot 41^2})$. As the argument in Lemma \ref{lem 3.2}, since $3\cdot 41^2 \equiv 13^5 \pmod {5^3},$ the prime ideal $(1-\zeta_5)$ in $\mathbb Q(\zeta_5)$ is unramified in $\mathbb Q(\zeta_{5},\sqrt[5]{3\cdot 41^2})$, and $\mathbb{Q}(\zeta_{5},\sqrt[5]{3},\sqrt[5]{41})$ is the composition of $\mathbb{Q}(\zeta_{5},\sqrt[5]{3 \cdot 41})$ and $\mathbb{Q}(\zeta_{5},\sqrt[5]{3 \cdot 41^2})$. Thus, the extension $\mathbb Q(\zeta_{40},\sqrt[5]{3},\sqrt[5]{41})/\mathbb Q(\zeta_{40},\sqrt[5]{3\cdot 41})$ is unramified everywhere. Since $\mathbb Q(\zeta_{40},\sqrt[5]{3},\sqrt[5]{41})$ has infinite $5$-class field tower, $\mathbb Q(\zeta_{40},\sqrt[5]{3\cdot 41})$ also possesses an infinite $5$-class field tower with a small root discriminant of approximately $1196.2$.
\end{proof}
\end{cor}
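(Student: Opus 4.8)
The plan is to instantiate the argument behind Theorem~\ref{thm 1.1} with $n=40$, $m=5$, $p=3$ and the explicit auxiliary prime $q=41$, in three steps: exhibit an everywhere-unramified extension above $\mathbb{Q}(\zeta_{40},\sqrt[5]{3\cdot 41})$, show that extension has an infinite $5$-class field tower via Proposition~\ref{prop 2.4}, and then descend. Write $F=\mathbb{Q}(\zeta_{40})$, $K=\mathbb{Q}(\zeta_{40},\sqrt[5]{3})$, $M=\mathbb{Q}(\zeta_{40},\sqrt[5]{3\cdot 41})$ and $N=\mathbb{Q}(\zeta_{40},\sqrt[5]{3},\sqrt[5]{41})$. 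Since $\mathbb{Q}(\zeta_5)\subseteq F$ and $[F:\mathbb{Q}(\zeta_5)]$ is prime to $5$, the classes of $3$ and $41$ remain independent in $F^{\times}/(F^{\times})^{5}$, so $\operatorname{Gal}(N/F)\cong(\mathbb{Z}/5\mathbb{Z})^{2}$, $M\subseteq N$, and $N/M$ is cyclic of degree $5$.

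\textbf{Step 1: $N/M$ is unramified everywhere.} By Lemma~\ref{lem 3.3}(I) any prime ramifying in $N/M$ lies over $2$, $3$, $5$ or $41$. Over $2$: since $2\nmid 5\cdot 3\cdot 41$, Lemma~\ref{lem 3.3}(I) gives that $F(\sqrt[5]{3})$, $F(\sqrt[5]{41})$ and $F(\sqrt[5]{3\cdot 41})$ are all unramified above $2$, hence so is $N/M$; the $2^{2n_0}$-condition of Proposition~\ref{prop 3.4} plays no role because $m$ is odd. Over $3$ and over $41$: at a prime $\mathfrak{p}$ of $F$ above one of these, exactly one of $3,41$ is a $\mathfrak{p}$-unit and $v_{\mathfrak{p}}(3\cdot 41)=1$ (both $3$ and $41$ are unramified in $F$, with $41$ even splitting completely as $41\equiv 1\bmod 40$), so $M_{\mathfrak{P}}/F_{\mathfrak{p}}$ and $N_{\mathfrak{Q}}/F_{\mathfrak{p}}$ both have ramification index $5$; hence $N/M$ is unramified over $3$ and $41$. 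The delicate case is over $5$. As $\mathbb{Q}(\zeta_{8})/\mathbb{Q}$ is unramified at $5$, the ramification of $M/F$ and of $N/F$ above $5$ coincides with that of $A=\mathbb{Q}(\zeta_5,\sqrt[5]{3\cdot 41})$ and of $AB=\mathbb{Q}(\zeta_5,\sqrt[5]{3},\sqrt[5]{41})$ above $\mathfrak{p}_5=(1-\zeta_5)$, where $B=\mathbb{Q}(\zeta_5,\sqrt[5]{3\cdot 41^{2}})$. I would check the congruence $3\cdot 41^{2}\equiv 13^{5}\bmod 5^{3}$, so that $3\cdot 41^{2}\in\mathbb{Q}_5^{5}=\mathbb{Q}_5(\zeta_5)^{5}$ by Lemma~\ref{lem 3.2}(I) and $\mathfrak{p}_5$ is unramified in $B$, whereas $3\cdot 41$ is not a $5$-th power mod $25$, so $\mathfrak{p}_5$ is totally ramified in $A$. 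The inertia group $I$ of a prime of $AB$ over $\mathfrak{p}_5$ then lies in $\operatorname{Gal}(AB/B)$ (order $5$) and surjects onto $\operatorname{Gal}(A/\mathbb{Q}(\zeta_5))$, whence $|I|=5$, the ramification index already present in $A$; so the ramification above $5$ does not grow from $M$ to $N$.

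\textbf{Step 2: $N$ has an infinite $5$-class field tower.} I would apply Proposition~\ref{prop 2.4} directly to the cyclic degree-$5$ extension $N/K$ with $p=5$, using $\delta_5=0$ and $r_1^{ram}(K)=0$ (the field $K$ is totally complex). Because $3$ is a $5$-th power mod $41$, the prime $41$ splits completely in $K$ into $\varphi(40)\cdot 5=80$ primes, each totally ramified in $N/K$ with abelian decomposition group, so $\sum_{\mathfrak{p}\in R}d_5\bigl(I_{\mathfrak{P}}/D'_{\mathfrak{P}}\bigr)\ge 80$. Dirichlet's unit theorem gives $d_5E_K\le 40$ and $d_5E_N\le 200$, hence $L_5\le 40+2\sqrt{201}<69$, and the hypothesis $80\ge d_5\bigl(\operatorname{Gal}(N/K)\bigr)+2+L_5=3+L_5$ of Proposition~\ref{prop 2.4} holds with room to spare — this is exactly the inequality underlying Theorem~\ref{thm 1.1}, whose class-number hypothesis $h_K\ge\frac{12}{\varphi(40)\cdot 5}+\frac{8}{\varphi(40)}=\frac{13}{20}$ is vacuous here. (Alternatively one could route through the Hilbert class field of $K$, provided the primes over $41$ are verified to be principal.) Thus $N$ has an infinite $5$-class field tower.

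\textbf{Step 3: descent, and the main obstacle.} Since $N/M$ is unramified with $[N:M]=5$ a power of $5$, the maximal unramified pro-$5$ extensions of $M$ and of $N$ coincide, and the $5$-class field tower of a number field is infinite precisely when this pro-$5$ extension is infinite; hence $M$ has an infinite $5$-class field tower because $N$ does, proving the corollary (the root discriminant $\approx 1196.2$ follows from the conductor--discriminant formula, cf.~\cite{sagemath}). The hard part is Step~1 above $5$: since $5$ divides both $m$ and $n$, the radical extensions are wildly ramified there and the naive splitting condition of Proposition~\ref{prop 3.4} fails for $q=41$ (indeed $41\not\equiv 1\bmod 25$), so one must instead pass through $B=\mathbb{Q}(\zeta_5,\sqrt[5]{3\cdot 41^{2}})$, use the explicit congruence $3\cdot 41^{2}\equiv 13^{5}\bmod 5^{3}$, and run the composite/inertia-group argument. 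A secondary subtlety, flagged by the Remark after the first corollary, is that the descent in Step~3 is valid only because $[N:M]$ is a power of $5$.
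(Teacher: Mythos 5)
Your proposal is correct and follows essentially the same approach as the paper: both observe that the class-number hypothesis of Theorem~\ref{thm 1.1} is vacuous here, both handle the delicate wild ramification above $5$ by introducing the auxiliary field $\mathbb{Q}(\zeta_5,\sqrt[5]{3\cdot 41^{2}})$ together with the congruence $3\cdot 41^{2}\equiv 13^{5}\bmod 5^{3}$ to show $(1-\zeta_5)$ is unramified there and hence $N/M$ is unramified, and both conclude by descent from $N$ to $M$. Your write-up is somewhat more explicit than the paper's in two useful places — it applies Proposition~\ref{prop 2.4} directly to the cyclic degree-$5$ extension $N/K$ (rather than routing through the Hilbert class field $H$, which sidesteps any dependence on the structure of $\operatorname{Cl}_K$), and it explicitly records that the descent in the final step preserves the $5$-class field tower precisely because $[N:M]=5$ is a $5$-power — but the underlying argument is the same.
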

\begin{cor}
The number field $\mathbb Q(\zeta_{35},\sqrt[7]{5\cdot 71})$ has infinite $7$-class field tower with root discriminant $\approx 1608.8$.
\begin{proof}
    The same argument as in Corollary \ref{cor 3.9}.
\end{proof}

\end{cor}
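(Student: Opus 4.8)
The plan is to follow the template established in Corollary \ref{cor 3.9}, since the statement to be proved is structurally identical, with $(m,p,q,n) = (7,5,71,35)$ in place of $(5,3,41,40)$. First I would observe that $\frac{12}{\varphi(35)\cdot 7} + \frac{8}{\varphi(35)} = \frac{12}{24\cdot 7} + \frac{8}{24} < 1$, so by Theorem \ref{thm 1.1} the hypothesis on the class number of $K = \mathbb{Q}(\zeta_{35}, \sqrt[7]{5})$ is automatically satisfied — no class-number computation is needed. Next I would verify, using \cite{sagemath}, that the rational prime $71$ splits completely in $K = \mathbb{Q}(\zeta_{35}, \sqrt[7]{5})$ (equivalently, that the prime ideals of $\mathbb{Q}(\zeta_{35})$ above $71$ split completely into principal ideals $(\sqrt[7]{5} - c)$-type factorizations, i.e.\ that $5$ is a $7$th power modulo the primes above $71$); this is the analogue of "$41$ is completely split in $K$" in Corollary \ref{cor 3.9}.

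Then I would check the ramification of the prime $(1-\zeta_7)$ above $7$. By Lemma \ref{lem 3.3}(II) the prime $(1-\zeta_7)$ of $\mathbb{Q}(\zeta_7)$ is totally ramified in both $\mathbb{Q}(\zeta_7, \sqrt[7]{5})$ and $\mathbb{Q}(\zeta_7, \sqrt[7]{71})$ (as $7 \nmid 5$ and $7 \nmid 71$). To see that $\mathbb{Q}(\zeta_7, \sqrt[7]{5}, \sqrt[7]{71})/\mathbb{Q}(\zeta_7, \sqrt[7]{5\cdot 71})$ is unramified at the primes above $7$, I would exhibit an exponent $j \in \{1,\dots,6\}$ such that $5 \cdot 71^{j}$ is a $7$th power modulo $7^3 = 343$; by the Hensel-type argument of Lemma \ref{lem 3.2}(I)/(II) (here $5 \cdot 71^j \in \mathbb{Q}_7^{7}$ iff $7^2 \mid ((5\cdot 71^j)^6 - 1)$, equivalently iff $5\cdot 71^j$ is a $7$th power mod $7^3$), this makes $(1-\zeta_7)$ unramified in $\mathbb{Q}(\zeta_7, \sqrt[7]{5\cdot 71^{j}})$. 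Since $\mathbb{Q}(\zeta_7, \sqrt[7]{5}, \sqrt[7]{71})$ is the compositum of $\mathbb{Q}(\zeta_7, \sqrt[7]{5\cdot 71})$ and $\mathbb{Q}(\zeta_7, \sqrt[7]{5\cdot 71^{j}})$ (as long as $j \not\equiv 1$, these two Kummer generators are independent modulo $7$th powers), the primes above $7$ are unramified in the relative extension. Combined with Lemma \ref{lem 3.3}(I), which handles all primes not dividing $7 \cdot 5 \cdot 71 \cdot 35$, and the split-completely condition handling $5$ and $71$ themselves exactly as in Proposition \ref{prop 3.4}, this shows $\mathbb{Q}(\zeta_{35}, \sqrt[7]{5}, \sqrt[7]{71}) / \mathbb{Q}(\zeta_{35}, \sqrt[7]{5\cdot 71})$ is unramified everywhere.

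Finally, applying Theorem \ref{thm 1.1} to $L = H(\sqrt[7]{71})$ over $H$ (the Hilbert class field of $\mathbb{Q}(\zeta_{35}, \sqrt[7]{5})$), exactly as in the proof of the main theorem, gives that $L$ — and hence $\mathbb{Q}(\zeta_{35}, \sqrt[7]{5}, \sqrt[7]{71})$, which it contains — has an infinite $7$-class field tower. Since $\mathbb{Q}(\zeta_{35}, \sqrt[7]{5}, \sqrt[7]{71})/\mathbb{Q}(\zeta_{35}, \sqrt[7]{5\cdot 71})$ is unramified, an infinite class field tower of the former forces one for the latter. The root discriminant $\approx 1608.8$ is then a direct computation from the conductor-discriminant formula for $\mathbb{Q}(\zeta_{35}, \sqrt[7]{5\cdot 71})$, carried out in \cite{sagemath}. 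I expect the only real obstacle to be the arithmetic check that produces the right exponent $j$ with $5 \cdot 71^{j} \equiv (\text{$7$th power}) \bmod 343$ and the verification that $71$ splits completely in $K$; both are finite computer checks rather than conceptual difficulties, which is why the authors are content to write "the same argument as in Corollary \ref{cor 3.9}."
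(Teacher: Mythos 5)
Your proposal is correct and follows essentially the same route as the paper, which simply instantiates the argument of Corollary \ref{cor 3.9} with $(m,p,q,n)=(7,5,71,35)$: the class-number condition is vacuous, $71\equiv 1\pmod{35}$ and $5$ is a seventh power mod $71$ give the splitting, and the twist you seek exists with $j=2$, since $5\cdot 71^{2}$ is a seventh power in $\mathbb{Q}_7$ (its sixth power is $\equiv 1 \bmod 49$), exactly paralleling $3\cdot 41^{2}\equiv 13^{5}\pmod{5^{3}}$.
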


\bibliographystyle{plain}
\bibliography{refs}

\end{document}